\newtheorem{theo}{Theorem}[section]
\newtheorem{cor}[theo]{Corollary}
\newtheorem{lemma}[theo]{Lemma}
\theoremstyle{definition}
\newtheorem{remark}[theo]{Remark}
\numberwithin{equation}{section}
\newcommand{\R}{\mathbb{R}}
\newcommand{\N}{\mathbb{N}}
\newcommand{\Sc}{\mathcal{S}}
\renewcommand{\dim}{\mathrm{dim}}
\newcommand{\vol}{\mathrm{vol}}
\newcommand{\rank}{\mathrm{rank}}
\DeclareMathOperator{\id}{id}
\DeclareMathOperator{\dis}{d}
\DeclareMathOperator{\diag}{diag}
\DeclareMathOperator{\diam}{diam}
\DeclareMathOperator{\codi}{codim}
\newcommand{\hr}{ \hookrightarrow }
\newcommand{\vertiii}[1]{{\left\vert\kern-0.25ex\left\vert\kern-0.25ex\left\vert #1 
    \right\vert\kern-0.25ex\right\vert\kern-0.25ex\right\vert}}
\begin{document}
\title{Entropy numbers of embeddings of Schatten classes}
\author{Aicke Hinrichs\footnote{Institute of Analysis, University Linz, Altenberger Str. 69, 4040 Linz, Austria, {\tt aicke.hinrichs@jku.at}},
Joscha Prochno\footnote{School of Mathematics \& Physical Sciences, University of Hull, Cottingham Road, HU6 7RX Hull, United Kingdom,
{\tt j.prochno@hull.ac.uk}},
Jan Vyb\'\i ral\footnote{Department of Mathematical Analysis, Charles University, Sokolovsk\'a 83, 186 00, Prague 8, Czech Republic, {\tt vybiral@karlin.mff.cuni.cz};
this author was supported by the ERC CZ grant LL1203 of the Czech Ministry of Education and by the Neuron Fund for Support of Science.}}
\date{\today}
\maketitle

\begin{abstract}
Let $0<p,q \leq \infty$ and denote by $\Sc_p^N$ and $\Sc_q^N$ the corresponding finite-dimensional Schatten classes. We prove optimal bounds, up to constants only depending on $p$ and $q$, for the entropy numbers of natural embeddings between $\Sc_p^N$ and $\Sc_q^N$. This complements the known results in the classical setting of natural embeddings between finite-dimensional $\ell_p$ spaces due to Sch\"utt, Edmunds-Triebel, Triebel and Gu\'edon-Litvak/K\"uhn.
We present a rather short proof that uses all the known techniques as well as a constructive proof of the upper bound in the range $N\leq n\leq N^2$ that allows deeper structural insight and is therefore interesting in its own right. Our main result can also be used to provide an alternative proof of recent lower bounds in the area of low-rank matrix recovery.
\end{abstract}

{\bf Keywords:} Entropy numbers, Schatten classes, Grassmann manifolds

\section{Introduction and Main Results}

The probably most important class of unitary operator ideals are the Schatten $p$-classes $\Sc_p$ ($0<p \leq \infty$) consisting of all compact operators between Hilbert spaces for which the sequence of their singular values belongs to $\ell_p$. In many ways, the structure of these spaces resembles that of the classical sequence spaces $\ell_p$, and the Schatten classes $\Sc_p$ form, in the sense of multiplication, their non-commutative counterpart. Some classical results on the topological structure of $\Sc_p$ spaces can be found, for instance, in \cite{McC1967} and \cite{AL1975}. Recent years have seen increased interest in those non-commutative $L_p$ spaces. One such case, in part motivating our work, lies in the area of compressed sensing in the form of low-rank matrix recovery (see, e.g., \cite{CR2009,CT2010,CK} and the references therein.)

The study of bounded linear operators between Banach spaces is a classical part of functional analysis. Of particular interest are compact operators, that is, those operators for which the image of the closed unit ball under the operator is relatively compact in the codomain space. In general, the extent to which a bounded linear operator is compact is measured by its sequence of entropy numbers.
The $n$-th (dyadic) entropy number of a bounded linear operator $T$ between Banach spaces $X$ and $Y$ (denoted by $e_n(T)$ in the sequel)
is the infimum over all $\varepsilon>0$ such that there are $y_1,\dots,y_{2^{n-1}}\in Y$ with 
\[
T(B_X) \subseteq \bigcup_{j=1}^{2^{n-1}}(y_j+\varepsilon B_Y),
\] 
where $B_X$, $B_Y$ denote the closed unit balls of $X$ and $Y$, respectively.
In fact, it is true that an operator between Banach spaces is compact if and only if the sequence of its entropy numbers converges to $0$. The theory of entropy numbers and their systematic study was introduced by Pietsch in \cite{Pietsch}. 

In the case of symmetric Banach sequence spaces, entropy numbers of diagonal operators were studied by Sch\"utt in 1984 \cite{S}, where he obtained bounds of exact order up to a logarithmic factor. In the particular case of the natural identity between $\ell_p^N$ and $\ell_q^N$ in the Banach space regime $1\leq p<q \leq \infty$, we write $\ell_p^N\hookrightarrow\ell_q^N$, he obtained the following asymptotically sharp estimates for the $n$-th dyadic entropy number $e_n$ \cite[Theorem 1]{S}: 
\vskip 1mm
If $1\leq p<q \leq \infty$ and $n,N\in\N$, then 
\begin{equation}\label{eq:scalar}
e_n\big(\ell_p^N\hookrightarrow\ell_q^N\big) \asymp_{p,q}
\begin{cases}
1\,,& 1\leq n \leq \log N\,; \\
\Big(\frac{\log(N/n+1)}{n}\Big)^{1/p-1/q}\,,& \log N \leq n \leq N\,; \\
2^{-n/N}\cdot N^{1/q-1/p}\,,& N\leq n.
\end{cases}
\end{equation}
It was in 1996 that Edmunds and Triebel proved corresponding upper bounds also for the quasi-Banach space regime $0<p \leq q \leq \infty$ \cite[Proposition 3.2.2]{ET}. Shortly after, complementing and corresponding lower estimates for $1\leq n \leq \log N$ and $N\leq n$ were obtained by Triebel in \cite[Theorem 7.3]{T97}.
Finally, the gap on the medium scale, $\log N \leq n \leq N$, was closed by Gu\'edon and Litvak \cite{GL} and, independently, by K\"uhn \cite{K2001}.

The main subject of this paper
is to prove the non-commutative counterpart
of \eqref{eq:scalar}, that is, the computation of entropy numbers of the natural identities between finite-dimensional Schatten classes in the full range $0<p,q \leq \infty$. 
For $0<p\le \infty$, let $\Sc_p^N$ denote the $N^2$-dimensional space of all real $N \times N$ matrices $A$ equipped with the Schatten $p$-norm
\[
\|A\|_{\Sc^N_p}=\bigg(\sum_{j=1}^N\sigma_j(A)^p\bigg)^{1/p},
\]
where $\sigma_j(A)$, $j=1,\dots,N$ are the singular values of $A$.
For $0<p,q \le \infty$, we denote the natural identity map from $\Sc_p^N$ to $\Sc_q^N$ by $\Sc_p^N \hr \Sc_q^N$.

Our main result is the following theorem.

\begin{theo}\label{thm:main}
 	Let $n,N\in\N$ and $0<p,q \leq \infty$. Then,
	\[
	e_n\big( \Sc_p^N \hr \Sc_q^N \big) \asymp_{p,q}
	\begin{cases}
	  2^{-\frac{n}{N^2}}\cdot N^{1/q-1/p}, & 0<q\le p \le  \infty\,; \\
	  1,                                   & 0<p\leq q \leq \infty, 1\leq n \leq N\,; \\
	  \big(\frac{N}{n}\big)^{1/p-1/q},     & 0<p\leq q \leq \infty, N \leq n \leq N^2\,; \\
	2^{-\frac{n}{N^2}}\cdot N^{1/q-1/p},   & 0<p\leq q \leq \infty, N^2\leq n,
	\end{cases}
	\]
	where $\asymp_{p,q}$ denotes equivalence up to constants depending only on $p$ and $q$.
\end{theo}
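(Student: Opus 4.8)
The plan is to reduce everything to the known scalar estimates \eqref{eq:scalar} together with two genuinely non-commutative ingredients. First I would handle the ``trivial'' directions. For $0<q\le p\le\infty$ the identity $\Sc_p^N\hr\Sc_q^N$ has norm $N^{1/q-1/p}$ (by H\"older on singular values) and $\Sc_p^N$ has dimension $N^2$, so the general volumetric/dimension bound $e_n(T)\le 4\,\|T\|\,2^{-(n-1)/\dim}$ gives the upper bound $2^{-n/N^2}N^{1/q-1/p}$ up to constants, while the matching lower bound follows from the standard fact that $e_n(\id_X)\gtrsim 2^{-n/\dim X}$ for any $\dim X$-dimensional quasi-normed space applied to a suitable renorming. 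Similarly, in the range $0<p\le q\le\infty$, $1\le n\le N$, the lower bound $e_n\gtrsim 1$ comes from embedding $\ell_p^N\hr\ell_q^N$ as diagonal matrices (a $1$-complemented, isometric subspace in both $\Sc_p^N$ and $\Sc_q^N$) and invoking \eqref{eq:scalar} in its first regime; the upper bound $e_n\le 1$ is just $\|\Sc_p^N\hr\Sc_q^N\|=1$ for $p\le q$.

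The heart of the matter is the regime $0<p\le q\le\infty$, $N\le n\le N^2$, where the answer $(N/n)^{1/p-1/q}$ has \emph{no} logarithmic factor, in contrast to the middle regime of \eqref{eq:scalar}. For the \textbf{lower bound} I would again use the diagonal embedding, but now more cleverly: a block-diagonal matrix built from $\lfloor N/\sqrt{\cdot}\,\rfloor$ copies... more precisely, I would test on matrices supported on a $k\times k$ block, for which $\Sc_p^k\hr\Sc_q^k$ lives inside $\Sc_p^N\hr\Sc_q^N$ isometrically and $1$-complementedly; combined with the elementary bound $e_n(\Sc_p^k\hr\Sc_q^k)\gtrsim e_n(\ell_p^k\hr\ell_q^k)\gtrsim (k/n)^{1/p-1/q}$ valid for $n$ up to $\asymp k$, and optimizing $k\asymp\sqrt n$ (so that $k^2\asymp n$ and the scalar estimate is used at $n\asymp k$, i.e.\ in its polynomial regime without the log), one recovers $(N/n)^{1/p-1/q}$ after rescaling back to ambient dimension $N$. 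For the \textbf{upper bound} the paper announces a constructive argument; the natural route is to cover $B_{\Sc_p^N}$ by first approximating a matrix $A$ by its best rank-$r$ approximation $A_r$ (controlled in $\Sc_q$-norm by $\|A-A_r\|_{\Sc_q}\le r^{-(1/p-1/q)}\|A\|_{\Sc_p}$ via the singular-value tail estimate), and then covering the set of rank-$\le r$ matrices in $B_{\Sc_p^N}$ using a net on the Grassmann manifold $G_{N,r}$ (two factors, for the left and right singular subspaces) together with a net on the $\le 2Nr$-dimensional ``core''. Choosing $r\asymp N^2/n$ balances the two error contributions; the Grassmannian covering number is $\le (C/\delta)^{Nr}$, which is exactly affordable since $Nr\asymp N^3/n\le n$ in this regime (using $n\ge N$, hence $n^2\ge N^2\cdot n\ge N^3$ — wait, one needs $n\ge N^{3/2}$; for $N\le n\le N^{3/2}$ one instead pushes $r$ down and uses that the answer is close to the endpoint value, handled by monotonicity of entropy numbers in $n$).

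For the remaining regime $0<p\le q\le\infty$, $n\ge N^2$, the upper bound is again the volumetric bound $e_n\le 4\cdot 2^{-(n-1)/N^2}$ (using $\|\Sc_p^N\hr\Sc_q^N\|=1$ together with $\vol$-comparison of $B_{\Sc_p^N}$ and $B_{\Sc_q^N}$, which contributes the factor $N^{1/q-1/p}$), and the lower bound is the universal $e_n(\id)\gtrsim 2^{-n/\dim}$ estimate on the $N^2$-dimensional space, multiplied by the $\Sc_q/\Sc_p$ volume ratio. The main obstacle, and the part requiring real care, is the constructive upper bound in the polynomial regime: one must make the three-fold net (left Grassmannian, right Grassmannian, core block) interact correctly so that the total cardinality is $2^{O(n)}$ while the approximation error is $O((N/n)^{1/p-1/q})$, and one must deal with the quasi-norm triangle inequality when $p<1$ or $q<1$, which costs only constants depending on $p,q$ but has to be tracked. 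I expect that splitting at $n\asymp N^{3/2}$ (below which one interpolates from the endpoints $n\asymp N$ and $n\asymp N^{3/2}$ by monotonicity, above which the net argument is clean) is the cleanest way to organize the proof.
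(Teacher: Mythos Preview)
There is a genuine gap in your lower bound for the polynomial regime $N\le n\le N^2$. Your plan is to reduce to $e_n(\ell_p^k\hr\ell_q^k)$ via diagonal (or $k\times k$ block) embeddings and then optimize over $k\le N$. But for any such $k$ you are forced into the range $n\ge k$ (since $n\ge N\ge k$), where the scalar estimate reads $e_n(\ell_p^k\hr\ell_q^k)\asymp 2^{-n/k}\,k^{1/q-1/p}$; this is exponentially small in $n/k$ and never recovers $(N/n)^{1/p-1/q}$. Your intermediate claim ``$e_n(\ell_p^k\hr\ell_q^k)\gtrsim (k/n)^{1/p-1/q}$'' is simply false --- the logarithm in \eqref{eq:scalar} is not decoration. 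The same failure already bites at the endpoint $n=N$: the diagonal only gives $e_N(\ell_p^N\hr\ell_q^N)\asymp N^{1/q-1/p}$, far below the required $\asymp 1$, so your argument for $1\le n\le N$ in fact covers only $1\le n\lesssim\log N$. The paper's lower bound is genuinely non-commutative: for $1\le k\le N/2$ one uses Szarek's volume estimates on the Grassmannian (Lemma~\ref{lem:volume metric balls grassmann schatten}) to find $2^{kN}$ projections $P_E$, $E\in G_{N,k}$, that are pairwise $\gtrsim k^{1/q}$-apart in $\Sc_q^N$; since $k^{-1/p}P_E\in B_p^N$, this yields $e_{kN}(\Sc_p^N\hr\Sc_q^N)\gtrsim k^{1/q-1/p}=(N/n)^{1/p-1/q}$ for $n=kN$. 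Diagonal matrices cannot see this packing.

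Your constructive upper bound has the right architecture but the wrong parameter: the rank cutoff should be $r\asymp n/N$, not $N^2/n$. With $r\asymp n/N$ the tail satisfies $\|A-A_r\|_{\Sc_q}\le r^{-(1/p-1/q)}=(N/n)^{1/p-1/q}$ and the net on rank-$\le r$ matrices has log-cardinality $\asymp rN\asymp n$, with no case split at $N^{3/2}$ needed --- your own ``wait'' was signalling exactly this miscalculation. The paper carries this out in Section~\ref{s:4} via a dyadic decomposition $A=A_1+\dots+A_\ell+A^c$ with $\rank(A_j)\le 2^{j-1}$ up to $2^{\ell-1}\asymp n/N$, a refinement of your single-cutoff idea. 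Separately, the paper's \emph{primary} upper-bound proof (Section~\ref{s:3}) avoids construction entirely: it imports the Gelfand-number bound $c_n(\Sc_p^N\hr\Sc_q^N)\lesssim (N/n)^{1/p-1/q}$ from \cite{CK} for $0<p\le 1$, $p\le q\le 2$, converts it to entropy via Carl's inequality (Lemma~\ref{lem:carl}), and then reaches all remaining $p,q$ by interpolation in the target and in the domain (Corollaries~\ref{cor:interpol target} and~\ref{cor:interpol domain}). The volumetric parts of your outline (the case $q\le p$ and the range $n\ge N^2$) match the paper's arguments.
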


The following remark briefly explains the connection of Theorem \ref{thm:main} to low-rank matrix recovery.

\begin{remark}
Our main result can be used to provide an alternative proof of lower bounds in the area of low-rank matrix recovery.
Let us assume that we want to recover a matrix $X$ from the unit ball of $\Sc_p^N$ (which we denote by $B_p^N$ in the sequel) from $m$ linear measurements of $X$ represented by
the information map $${\mathcal A}(X)=(\langle A_1,X\rangle,\dots,\langle A_m,X\rangle)\in\R^m.$$
The search for the optimal pair of information map ${\mathcal A}:\R^{N\times N}\to \R^m$ and recovery map $\Delta:\R^m\to \R^{N\times N}$
is expressed in terms of the quantities
\[
E_m\big(B_p^N,\Sc_q^N\big)=\inf_{\substack{{\mathcal A}\,:\,\R^{N\times N}\to \R^m\\ \Delta\,:\,\R^m\to \R^{N\times N}}}\,\sup_{X\in B_p^N}\|X-\Delta({\mathcal A}(X))\|_{\Sc_q^N}.
\]
Using their relation to Gelfand numbers and Carl's inequality (see Section \ref{s:2} and \cite[Section 10]{FR} for details), Theorem \ref{thm:main}
immediately implies that, for $0<p\le q\le \infty$ and $1\le m\le N^2$, 
\[
E_m\big(B_p^N,\Sc_q^N\big)\gtrsim_{p,q}\min\Bigl\{1,\frac{N}{m}\Bigr\}^{1/p-1/q}.
\]
This provides another proof for the lower bound recently given in \cite[Theorem 5.5]{CK}, where the same result was obtained in the regime
$0<p\le 1$ and $p<q\le 2$.
\end{remark}

As typical in the theory of Banach spaces, our proofs reflect a variety of ideas and tools of analytical and geometric flavor. These include 

\renewcommand\labelitemi{\tiny$\bullet$}
\begin{itemize}
\item Carl's inequality for the case of quasi-Banach spaces \cite{HKV} 
\item two-sided asymptotic estimates for the Gelfand numbers of identities of Schatten classes \cite{CK}
\item volume estimates for metric balls in the Grassmannian going back to the work of Szarek \cite{Sz} (see also \cite{PV}), and
\item interpolation estimates of entropy numbers in the domain and target spaces (see \cite{Pi71} and \cite{PiXu}).
\end{itemize}

The rest of the paper is organized as follows. In the next section, we give the necessary definitions and notation and recall some results needed for the proof of Theorem \ref{thm:main}.
In Section \ref{s:3} a proof of Theorem \ref{thm:main}, which is intended to be as short as possible using all known machinery to estimate entropy numbers, is presented.
Most of the cases are rather standard, but some require deeper tools from the structure theory of Schatten classes.
For some purposes it is much more instructive to find explicit covers achieving optimal upper bounds for the entropy numbers. This construction will be given in Section \ref{s:4}.  

\section{Notation and Preliminaries} \label{s:2}
In this section we present the notation and preliminaries we use throughout this text. For the sake of clarity, we subdivide the background material by topic. Since we have a rather broad audience in mind, we try to keep this work as self contained as possible.

\subsection{General notation}

The cardinality of a set $A$ will be denoted by $|A|$. We denote by $\vol_n(A)$ the $n$-dimensional Lebesgue measure of a Borel measurable set $A\subseteq\R^n$ and will usually suppress the index $n$ for brevity.
We supply the $n$-dimensional Euclidean space $\R^n$ with its standard inner product $\langle\,\cdot\,,\,\cdot\,\rangle$ and corresponding Euclidean structure $\|\,\cdot\,\|_2$.
For $0< p\leq \infty$, we denote by $\ell_p^n$ the space $\R^n$ equipped with the (quasi-)norm 
\[
\|(x_i)_{i=1}^n\|_p=\begin{cases}
\big(\sum_{i=1}^n|x_i|^p\big)^{1/p}\,,& 0< p < \infty\,; \\
\max_{1\leq i \leq n}|x_i|\,,& p=\infty.
\end{cases}
\]
Given two (quasi-)Banach spaces $X,Y$, we denote by $B_X$ the closed unit ball of $X$ and by $\mathcal L(X,Y)$ the space of bounded linear operators between $X$ and $Y$ equipped with the standard operator (quasi-)norm.

Finally, for two sequences $(a(n))_{n\in\N}$ and $(b(n))_{n\in\N}$ of non-negative real numbers, we write $a(n)\asymp b(n)$ provided that there exist constants $c,C\in(0,\infty)$ such that $cb(n)\leq a(n)\leq Cb(n)$ for all $n\in\N$. If the constants depend on some parameter $p$, we shall write $a(n)\lesssim_p b(n)$, $a(n)\gtrsim_p b(n)$ or, if both hold, $a(n)\asymp_p b(n)$.

\subsection{Quasi-Banach spaces}

We briefly present some background on quasi-Banach spaces. If $X$ is a (real) vector space, then we call $\|\cdot\|_X:X\to[0,\infty)$ a quasi-norm if 
\begin{enumerate}
\item $\|x\|_X=0$ if and only if $x=0$
\item $\|\alpha x\|_X=|\alpha|\cdot \|x\|_X$ for all $\alpha\in\R$ and $x\in X$
\item There exists a constant $C\in[1,\infty)$ such that $\|x+y\|_X \leq C(\|x\|_X+\|y\|_X)$ for all $x,y\in X$.
\end{enumerate}
By the Aoki-Rolewicz theorem (see \cite{Aoki,Rol}), every quasi-norm is equivalent to some $p$-norm, that is, there exists a mapping
$|||\cdot|||_X:X\to[0,\infty)$, which is equivalent to $\|\cdot\|_X$ on $X$, and some $0<p\leq 1$ such that $|||\cdot|||_X$ satisfies 1. and 2. above, while 3. is replaced by
$|||x+y|||_X^p \leq |||x|||_X^p+|||y|||_X^p$. The mapping $|||\cdot|||_X$ is then
called a $p$-norm and if $X$ is complete with respect to the metric induced by $|||\cdot|||_X^p$, $X$ is called a $p$-Banach space and the original space $X$ with the quasi-norm $\|\cdot\|_X$ is called a quasi-Banach space. Therefore, as far as estimates up to a multiplicative constant are concerned, we may assume that a quasi-Banach space is equipped with an equivalent $p$-norm instead.

\subsection{Singular values and Schatten $p$-classes}

The singular values $\sigma_1,\dots,\sigma_N$ of a real $N\times N$ matrix $A$ are defined to be the square roots of the eigenvalues of the positive self-adjoint operator $A^*A$, which are simply the eigenvalues of $|A|:=\sqrt{A^*A}$. The singular values are arranged in non-increasing order, that is, $\sigma_1(A) \geq \dots \geq \sigma_N(A)\geq 0$. The singular value decomposition shall be used in the form $A=U\Sigma V^T$, where $U,V\in\R^{N\times N}$ are orthogonal matrices,
and $\Sigma\in\R^{N\times N}$ is a diagonal matrix with $\sigma_1(A),\dots,\sigma_N(A)$ on the diagonal.

For $0<p\le \infty$, the Schatten $p$-class $\Sc_p^N$ is the $N^2$-dimensional space of all $N\times N$ real matrices acting from $\ell_2^N$ to $\ell_2^N$ equipped with the Schatten $p$-norm
\[
\|A\|_{\Sc^N_p}=\bigg(\sum_{j=1}^N\sigma_j(A)^p\bigg)^{1/p}.
\]
Let us remark that $\|\cdot\|_{\Sc_1^N}$ is the nuclear norm, $\| \cdot \|_{\Sc_2^N}$ the Hilbert-Schmidt norm, and $\|\cdot\|_{\Sc_\infty^N}$ the operator norm. We denote the unit ball of $\Sc_p^N$ by
\begin{align*}
B_p^N :=\Big\{A\in\R^{N\times N}\,:\,\|A\|_{\Sc^N_p}\le 1\Big\}.
\end{align*}
The following volume estimate for the unit balls of the Schatten $p$-classes can be found in \cite[Corollary 8]{SR} for the case $p\ge 1$. Following along the lines of the proof in \cite{SR} it is not too hard to see that it carries over to the case $0<p\leq 1$.

\begin{lemma}\label{lem:schatten-volume}
Let $0<p\le \infty$ and $N\in\N$. Then
 \begin{equation}\label{eq:vol_BpN}
  \vol(B_p^N)^{1/N^2}\asymp_p N^{-1/2-1/p}.
 \end{equation}
\end{lemma}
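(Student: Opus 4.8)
The plan is to compute $\vol(B_p^N)$ via integration in "singular value coordinates", i.e.\ by pushing the Lebesgue measure on $\R^{N\times N}$ forward under the singular value decomposition. Writing $A = U \Sigma V^T$ with $U,V$ orthogonal and $\Sigma = \diag(\sigma_1,\dots,\sigma_N)$, the classical change of variables formula (see, e.g., the Weyl integration formula for the singular value decomposition) gives that, up to a constant $c_N>0$ depending only on $N$,
\[
\vol(B_p^N) = c_N \int_{\substack{\sigma_1\ge\cdots\ge\sigma_N\ge 0\\ \sum_j \sigma_j^p \le 1}} \prod_{1\le i<j\le N}|\sigma_i^2-\sigma_j^2|\, d\sigma.
\]
Here $c_N$ collects the volume of the orthogonal group factors (for the $U$ and $V$ variables, which range over $O(N)$) together with the combinatorial factor accounting for the ordering of the $\sigma_j$. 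Since we only need the estimate up to constants depending on $p$, and since these $O(N)$-volumes also appear in the known case $p=2$ (where $B_2^N$ is the Euclidean ball of radius $1$ in $\R^{N^2}$, whose volume is explicitly known), the cleanest route is to evaluate the integral above for $p$ and for $p=2$ and take the ratio, so that all $N$-dependent prefactors except the integral cancel.

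The key step is therefore to estimate, up to constants depending on $p$, the integral
\[
I_p(N) := \int_{\Delta_N}\ \prod_{i<j}|\sigma_i^2-\sigma_j^2|\, d\sigma, \qquad \Delta_N=\Big\{\sigma_1\ge\cdots\ge\sigma_N\ge 0,\ \textstyle\sum_j \sigma_j^p\le 1\Big\},
\]
and to show $I_p(N)^{1/N^2}\asymp_p I_2(N)^{1/N^2}\cdot N^{(1/2-1/p)}\cdot(\text{lower order})$, or more precisely that the passage from $p=2$ to general $p$ changes the $N^2$-th root of the volume exactly by the factor $N^{1/p-1/2}$ (inverted relative to \eqref{eq:vol_BpN} because a larger $p$ makes the ball larger). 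One way to do this cleanly: scale out the constraint by substituting $\sigma_j = t\,\tau_j$ with $\sum \tau_j^p = 1$; the Vandermonde-type factor is homogeneous of degree $N(N-1)$ in $\sigma$ and the simplex constraint $\sum\sigma_j^p\le 1$ contributes an extra $N$ degrees of freedom, so a direct homogeneity count shows
\[
\vol(B_p^N) = \vol(B_2^N)\cdot \frac{J_p(N)}{J_2(N)},
\]
where $J_p(N) = \int_{\sum_j x_j^p \le 1,\ x_j\ge 0} \prod_{i<j}(x_i^2-x_j^2)\,dx$ over the positive orthant (dropping the ordering and absorbing the symmetrizing constant). It then remains to show $\big(J_p(N)/J_2(N)\big)^{1/N^2}\asymp_p N^{1/p-1/2}$.

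I expect this last asymptotic to be the main obstacle. It should follow from the reference \cite{SR}, where it is done for $p\ge 1$; the claim of the lemma is that the proof there extends verbatim to $0<p\le 1$. The only places where $p\ge 1$ could conceivably be used are convexity of the unit ball and convexity of the map $x\mapsto \|x\|_{\ell_p^N}$ — but the integral computation is purely about the constraint region $\{\sum x_j^p\le 1\}$ and the polynomial weight $\prod_{i<j}(x_i^2-x_j^2)$, and these manipulations (substituting $y_j = x_j^p$, recognizing a Dirichlet-type integral / Selberg-type integral, and applying Stirling's formula to the resulting product of Gamma functions) go through for any $p>0$ without any convexity input. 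So the honest proof is: reproduce the computation of \cite{SR}, note that the only inequalities invoked are Stirling bounds on $\Gamma$, which hold regardless of $p$, and read off that $\big(\vol(B_p^N)\big)^{1/N^2}\asymp_p N^{-1/2-1/p}$ for all $0<p\le\infty$ (the case $p=\infty$ being $B_\infty^N = \{\|A\|_{op}\le 1\}$, handled directly or by the monotone limit).
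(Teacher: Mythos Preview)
Your proposal is essentially the paper's own argument: the paper gives no proof beyond citing \cite[Corollary~8]{SR} for $p\ge 1$ and remarking that ``following along the lines of the proof in \cite{SR} it is not too hard to see that it carries over to the case $0<p\le 1$'', and your outline (SVD/Weyl integration, homogeneity to cancel the $O(N)$-volume prefactor, Dirichlet/Selberg-type evaluation, Stirling) is precisely a sketch of that Saint-Raymond computation together with the same observation that no convexity of $\|\cdot\|_p$ is used. One small slip: your displayed exponent $(J_p/J_2)^{1/N^2}\asymp_p N^{1/p-1/2}$ has the sign reversed---it should read $N^{1/2-1/p}$, consistent with $\vol(B_p^N)^{1/N^2}\asymp_p N^{-1/2-1/p}$ and with your own earlier line $I_p(N)^{1/N^2}\asymp_p I_2(N)^{1/N^2}\cdot N^{1/2-1/p}$.
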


Please note that if $p\ge 1$, then $\|\cdot\|_{\Sc_p^N}$ is a norm, and if $0<p<1$, then $\|\cdot\|_{\Sc_p^N}$ is a $p$-norm, i.e., a quasi-norm satisfying the $p$-triangle inequality
\[
\|A+B\|_{\Sc_p^N}^p \leq \|A\|_{\Sc_p^N}^p + \|B\|_{\Sc_p^N}^p,\qquad A,B\in \Sc_p^N.
\]
For $p,q\in[1,\infty]$ with $\frac{1}{p}+\frac{1}{q}=1$, the Schatten classes satisfy the following H\"older type inequality,
\[
\|AB\|_{\Sc_1^N} \leq \|A\|_{\Sc_p^N}\|B\|_{\Sc_q^N}, \qquad A\in \Sc_p^N, B\in \Sc_q^N.
\]

\subsection{Unitarily invariant and symmetric norms}

The Schatten $p$-norms (for $p\ge 1$) are examples of unitarily invariant norms. In particular, unitarily invariant norms correspond to 1-symmetric norms, similarly as the Schatten $p$-norm corresponds to the $\ell_p$-norm.

A basis $\{e_i\}_{i=1}^N$ of an $N$-dimensional Banach space $X$ is called $1$-symmetric if and only if for all permutations $\pi$ of $\{1,\dots,N\}$, all signs $\varepsilon_i\in\{-1,1\}$, $i=1,\dots,N$, and all $a_1,\dots,a_N\in\R$,
\[
\Big\|\sum_{i=1}^na_ie_i\Big\|_X = \Big\|\sum_{i=1}^N \varepsilon_ia_{\pi(i)}e_i\Big\|_X.
\]

A norm on $\mathcal L(\ell_2^N,\ell_2^N)$ is called a unitarily invariant norm, we write $\|\cdot\|_{inv}$, if it satisfies 
\[
\|A\|_{inv}=\|UAV\|_{inv}
\] 
for any $A\in\mathcal L(\ell_2^N,\ell_2^N)$ and all isometries $U,V$ on $\ell_2^N$. Since we are only considering the real case, $U,V$ are simply orthogonal matrices. It is well-known and easy to see that to any such norm $\|\cdot\|_{inv}$ corresponds a $1$-symmetric norm $\|\cdot \|$ on $\R^N$ so that, for all $A\in\mathcal L(\ell_2^N,\ell_2^N)$,
\[
\|A\|_{inv} = \|(\sigma_i(A))_{i=1}^N\|,
\]
where $(\sigma_i(A))_{i=1}^N$ is the vector of singular values of $A$,
and, on the other hand, $\|A\|_{inv}=\|(x_1,\dots,x_N)\|$ for any diagonal matrix $A=\diag(x_1,\dots,x_N)$. For some background on unitarily invariant norms, we refer the reader to \cite{HJ}.

\subsection{Geometry of Grassmannians}

Given $N\in\N$ and $k\in\{0,1,\ldots,N\}$, we denote by $G_{N,k}$ the Grassmannian of $k$-dimensional linear subspaces of $\R^N$. Recall that its dimension is $k(N-k)$.
We equip $G_{N,k}$ with a metric $\dis(\cdot,\cdot)$ induced by some unitarily invariant norm $\| \, \cdot \,\|_{inv}$ on $\mathcal L(\ell_2^N,\ell_2^N)$ by defining
\begin{equation}{\label{eq:operator norm projection}}
\dis(E,F):= \|P_E-P_F\|_{inv},\qquad E,F\in G_{N,k},
\end{equation}
under the embedding $F\mapsto P_F$ of the Grassmann manifold into $\mathcal L(\ell_2^N,\ell_2^N)$, where $P_E, P_F$ stand for the orthogonal projections onto $E$ and $F$, respectively. Hence, we can and will think of the Grassmann manifold $G_{N,k}$ as the set of orthogonal projections on $\ell_2^N$ of given rank $k$.
A typical example of such an induced metric on the Grassmannian $G_{N,k}$ 
is the one induced by the operator norm.
Note that, since we identify $G_{N,k}$ with the orthogonal projections on $\ell_2^N$ with given rank $k$,
the singular value decomposition shows that $k^{-1/p}P_E \in B_p^N$ for each $E\in G_{N,k}$. Therefore, 
\[
k^{-1/p}G_{N,k} \subseteq B_p^N \subseteq B_{\infty}^N.
\]

The following result shows that the volume of metric balls in the Grassmannian is essentially independent of the chosen unitarily invariant norm. It is an immediate consequence of Szarek's result from \cite{Sz} on nets in Grassmann manifolds.
\begin{lemma}[\cite{PV}, Corollary 2.2]\label{lem:volume metric balls grassmann}
	Let $1\leq k \leq N-1$, $\dis(\cdot,\cdot)$ be a metric on $G_{N,k}$ induced by some unitarily invariant norm on $\mathcal L(\ell_2^N,\ell_2^N)$ normalized such that $\diam(G_{N,k},\dis)=1$. Let $0<\delta < 1$. Then, for any $F\in G_{N,k}$, 
	\[
	\Big(\frac{\delta}{c_1}\Big)^{k(N-k)} \leq \mu_{N,k}\big(B_{\dis}(F,\delta)\big) \leq 	\Big(\frac{\delta}{c_2}\Big)^{k(N-k)},
	\]
	where $c_1,c_2\in(0,\infty)$ are absolute constants and $B_{\dis}(F,\delta):=\{E\in G_{N,k}\,:\, \dis(F,E) < \delta \}$.
\end{lemma}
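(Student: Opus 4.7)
The plan is to reduce the two-sided volume estimate to a bound on the packing and covering numbers of $(G_{N,k},\dis)$ via a standard volume-versus-packing comparison, after first exploiting the unitary invariance of both the metric $\dis$ and the measure $\mu_{N,k}$ to eliminate the dependence on $F$.

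For the invariance reduction, I would observe that the identities $P_{UE}=UP_EU^T$ and $\|U(P_E-P_F)U^T\|_{inv}=\|P_E-P_F\|_{inv}$ show that the orthogonal group $O(N)$ acts on $(G_{N,k},\dis)$ by isometries. Since $\mu_{N,k}$ is, up to normalization, the unique $O(N)$-invariant probability measure on $G_{N,k}$, the quantity $\mu_{N,k}(B_\dis(F,\delta))$ is independent of $F$; call it $v(\delta)$. It remains to show $(c'\delta)^{k(N-k)}\leq v(\delta)\leq (C'\delta)^{k(N-k)}$ for $0<\delta<1$.

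For the volume--packing comparison, let $P(\delta)$ and $N(\delta)$ denote the $\delta$-packing and $\delta$-covering numbers of $(G_{N,k},\dis)$. Disjointness of the balls of radius $\delta/2$ around a maximal $\delta$-separated family gives $P(\delta)\,v(\delta/2)\leq 1$, while any $\delta$-cover yields $N(\delta)\,v(\delta)\geq 1$. Hence
\[
\frac{1}{N(\delta)}\;\leq\; v(\delta)\;\leq\; \frac{1}{P(2\delta)}.
\]
Combining this with the Szarek-type bound $(c_0/\delta)^{k(N-k)}\leq P(\delta)\leq N(\delta)\leq (C_0/\delta)^{k(N-k)}$ for $0<\delta<1$, which is precisely the packing/covering estimate of \cite[Corollary 2.2]{PV} (extending \cite{Sz} from the operator norm to any unitarily invariant norm after the diameter normalization $\diam(G_{N,k},\dis)=1$), yields the required estimate on $v(\delta)$ with absolute constants, finishing the proof.

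The main conceptual obstacle sits inside the cited packing/covering bound: the nontrivial step is to establish that, after imposing $\diam(G_{N,k},\dis)=1$, the constants $c_0,C_0$ can be taken \emph{absolute}, with no dependence on the particular unitarily invariant norm chosen or on $k$ and $N$. Szarek's original argument handles this for the operator norm by passing to a local chart around a fixed projection in which the metric ball is bi-Lipschitz to a Euclidean ball of dimension $k(N-k)$; the extension to an arbitrary $\|\cdot\|_{inv}$ exploits that, on differences of two rank-$k$ projections, all unitarily invariant norms are sandwiched between the operator norm and the trace norm, and the resulting factors are exactly swallowed by the diameter normalization. Once that bound is in hand, the argument above is a routine measure-theoretic computation based only on the existence and uniqueness of the invariant probability measure together with the triangle inequality.
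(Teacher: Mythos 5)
Your proof is correct and follows exactly the route the paper intends: the paper does not prove this lemma but imports it from \cite{PV} as an immediate consequence of Szarek's net estimates on Grassmannians, and your derivation (orthogonal invariance of $\mu_{N,k}$ and of the induced metric to remove the dependence on $F$, then the standard packing/covering versus volume comparison, then the two-sided bound $N(\delta)\asymp (1/\delta)^{k(N-k)}$ for invariant metrics normalized to have diameter one) is precisely that argument. The only blemish is a citation slip: the packing/covering estimate you invoke is Szarek's result from \cite{Sz} (see also \cite{Sz2}), not Corollary 2.2 of \cite{PV} --- the latter is the volume statement being proved, so citing it for the net bounds would be circular.
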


We will need this lemma in the particular case that $\dis(\cdot,\cdot)$ is induced by the Schatten $q$-norm. If $1\le k \le N/2$ and
$E,F \in G_{N,k}$ are orthogonal, we have
\[
\dis(E,F):= \|P_E-P_F\|_{\Sc_q^N} = (2k)^{1/q}.
\]  
On the other hand, the triangle inequality shows that $\dis(E,F) \le 2 k^{1/q}$. Hence, the diameter $d$ of $G_{N,k}$ with respect to the Schatten $q$-norm satisfies $(2k)^{1/q} \le d \le 2 k^{1/q}$ and Lemma \ref{lem:volume metric balls grassmann} implies
\begin{lemma}\label{lem:volume metric balls grassmann schatten}
	Let $1\leq k \leq N/2$, $q\ge 1$ and $0<\delta < k^{1/q}$. Then, for any $F\in G_{N,k}$, 
	\[
	\Big(\frac{\delta}{c_1 k^{1/q}}\Big)^{k(N-k)} \leq \mu_{N,k}\big(B(F,\delta)\big) \leq 	\Big(\frac{\delta}{c_2 k^{1/q}}\Big)^{k(N-k)},
	\]
	with absolute constants $c_1,c_2\in(0,\infty)$ and $B(F,\delta):=\{E\in G_{N,k}\,:\, \|P_E-P_F\|_{\Sc_q^N} < \delta \}$.
\end{lemma}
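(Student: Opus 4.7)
The plan is to deduce this lemma directly from Lemma~\ref{lem:volume metric balls grassmann} once we have normalized the Schatten $q$-metric on $G_{N,k}$ to have diameter $1$. The paragraph preceding the statement already records most of what is needed; I would simply flesh out the two small checks and perform the rescaling.

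First I would establish the diameter estimate $d := \diam(G_{N,k}, \|\cdot\|_{\Sc_q^N}) \asymp k^{1/q}$. The upper bound $d \le 2k^{1/q}$ is immediate from the triangle inequality, using that $\|P_E\|_{\Sc_q^N} = k^{1/q}$ for every $E \in G_{N,k}$. For the matching lower bound, the assumption $k \le N/2$ lets me choose two orthogonal $k$-dimensional subspaces $E, F \subseteq \R^N$. Since $P_E P_F = P_F P_E = 0$, one computes $(P_E - P_F)^2 = P_E + P_F$, which is an orthogonal projection of rank $2k$. Hence $P_E - P_F$ has singular values equal to $1$ with multiplicity $2k$ and $0$ otherwise, so $\|P_E - P_F\|_{\Sc_q^N} = (2k)^{1/q}$. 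This gives $(2k)^{1/q} \le d \le 2k^{1/q}$.

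Next I would renormalize by setting $\tilde{\dis} := d^{-1}\|\cdot\|_{\Sc_q^N}$, which induces a metric on $G_{N,k}$ of diameter exactly $1$ coming from the unitarily invariant norm $d^{-1}\|\cdot\|_{\Sc_q^N}$ on $\mathcal L(\ell_2^N, \ell_2^N)$. The Schatten ball $B(F,\delta)$ in the statement coincides with the rescaled ball $B_{\tilde{\dis}}(F, \delta/d)$. The hypothesis $\delta < k^{1/q}$ combined with $d \ge (2k)^{1/q} \ge k^{1/q}$ ensures $0 < \delta/d < 1$, so Lemma~\ref{lem:volume metric balls grassmann} applies and yields
\[
\Bigl(\frac{\delta/d}{c_1}\Bigr)^{k(N-k)} \le \mu_{N,k}\bigl(B(F,\delta)\bigr) \le \Bigl(\frac{\delta/d}{c_2}\Bigr)^{k(N-k)}.
\]

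Finally, since $k^{1/q} \le d \le 2 k^{1/q}$, replacing $d$ by $k^{1/q}$ only changes the constants by an absolute factor bounded by $2$, which can be absorbed into new constants $c_1, c_2$. This produces the claimed two-sided estimate. There is essentially no obstacle beyond the elementary diameter computation; the content of the lemma is really the rescaling observation combined with Lemma~\ref{lem:volume metric balls grassmann}.
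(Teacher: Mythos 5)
Your argument is correct and is essentially the same as the paper's: the paper also derives the lemma from Lemma~\ref{lem:volume metric balls grassmann} by first noting $(2k)^{1/q}\le \diam(G_{N,k},\|\cdot\|_{\Sc_q^N})\le 2k^{1/q}$ (using orthogonal $E,F$ for the lower bound and the triangle inequality for the upper bound) and then rescaling. You have simply filled in a few details the paper leaves implicit, such as the singular-value computation for $P_E-P_F$ and the check that $\delta/d<1$.
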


\subsection{Entropy numbers and other $s$-numbers}

Let $X$ be a quasi-Banach space and let $n\in\N$. The $n$-th dyadic entropy number of $A\subseteq X$, $e_n(A,X)$, is defined as
\[
e_n(A) := e_n(A,X) := \inf\Big\{\varepsilon>0 \,:\,\exists\, x_1\dots,x_{2^{n-1}}\in X:\, A \subseteq \bigcup_{j=1}^{2^{n-1}}\big(x_j+\varepsilon B_X\big) \Big\}.
\]
For a bounded linear operator $T$ between quasi-Banach spaces $X$ and $Y$, the $n$-th dyadic entropy number, $e_n(T)$, is defined as 
\[
e_n(T) := e_n\big(T(B_X),Y\big)=\inf\Big\{\varepsilon>0 \,:\, \exists\,y_1,\dots,y_{2^{n-1}}\in Y:\, T(B_X) \subseteq \bigcup_{j=1}^{2^{n-1}}\big(y_j+\varepsilon B_Y\big) \Big\}.
\]
The dyadic entropy numbers are almost $s$-numbers and hence satisfy the following properties that we will use frequently in this text without always referring to them:
\begin{enumerate}
	\item $\|T\|\geq e_1(T) \geq e_2(T) \geq \dots \geq 0$ for $T\in\mathcal L(X,Y)$.
	\item $e_n(S+T) \leq e_n(S) + \|T\|$ for all $n\in\N$ and $S,T\in\mathcal L(X,Y)$.
	\item $e_n(RTS) \leq \|R\|\, e_n(T)\,\|S\|$ for all $n\in\N$, $S\in\mathcal L(W,X)$, $T\in\mathcal L(X,Y)$, and $R\in\mathcal L(Y,Z)$.
	\item $e_{n+m-1}(S+T) \leq C_Y\big(e_n(S)+e_m(T)\big)$ for all $n,m\in\N$ and $S,T\in\mathcal L(X,Y) $.
	\item $e_{n+m-1}(ST) \leq e_n(S)\cdot e_m(T)$ for all $n,m\in\N$ and $T\in\mathcal L(X,Y)$, $S\in\mathcal L(Y,Z)$.
\end{enumerate}
The reader may find the proofs of these facts, for instance, in \cite{CaSt} or \cite{ET}.

The following result is an analogue of \cite[Proposition 12.1.13]{Pietsch} for entropy numbers in the quasi-Banach space case.

\begin{lemma}[\cite{HKV}, Lemma 2.1]\label{lem:entropy_finite}
	Let $0<p\leq 1$, $m\in \N$, and let $X$ be a real $m$-dimensional $p$-Banach space. Then, for all $n\in\N$,
	\begin{equation}
	e_n(X\hr X) \leq 4^{1/p}\cdot 2^{-\frac{n-1}{m}}.
	\end{equation}
\end{lemma}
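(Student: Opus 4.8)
The plan is to prove Lemma \ref{lem:entropy_finite}, the statement that for a real $m$-dimensional $p$-Banach space $X$ with $0<p\le 1$ one has $e_n(X\hr X)\le 4^{1/p}\cdot 2^{-(n-1)/m}$, by a volumetric packing argument, exactly mirroring the classical Banach-space proof of Pietsch but carrying the quasi-norm constant carefully. The key point is that the identity map's $n$-th entropy number is governed by how many translates of $\varepsilon B_X$ are needed to cover $B_X$, and this can be controlled by comparing volumes in the finite-dimensional vector space $\R^m$ (on which $\|\cdot\|_X^p$ induces an honest metric, hence a reasonable notion of Lebesgue volume after fixing any linear isomorphism $X\cong\R^m$).

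First I would fix $\varepsilon>0$ and choose a maximal $\varepsilon$-separated subset $\{x_1,\dots,x_M\}$ of $B_X$ with respect to $\|\cdot\|_X$; maximality gives that the balls $x_j+\varepsilon B_X$ cover $B_X$, so $e_{n}(X\hr X)\le\varepsilon$ as soon as $M\le 2^{n-1}$. To bound $M$, I would use that the scaled balls $x_j+\tfrac{\varepsilon}{2^{1/p}}B_X$ have pairwise disjoint interiors — here the $p$-triangle inequality is exactly what is needed: if $y$ lay in two of them, then $\|x_i-x_j\|_X^p\le\|x_i-y\|_X^p+\|y-x_j\|_X^p\le 2(\varepsilon/2^{1/p})^p=\varepsilon^p$, contradicting $\varepsilon$-separation. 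All these small balls are contained in $(1+\varepsilon/2^{1/p})B_X\subseteq 2B_X$ (for $\varepsilon\le 2^{1/p}$, which is the only regime of interest since $e_n\le 1$ trivially otherwise), so comparing $m$-dimensional volumes yields
\[
M\cdot\Big(\frac{\varepsilon}{2^{1/p}}\Big)^m\vol(B_X)\le (2)^m\vol(B_X),
\]
hence $M\le (2^{1+1/p}/\varepsilon)^m = 4^{m/p}\varepsilon^{-m}$ after using $1+1/p\le 2/p$.

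Then I would simply invert this: given $n$, set $\varepsilon = 4^{1/p}\cdot 2^{-(n-1)/m}$; this forces $4^{m/p}\varepsilon^{-m}= 2^{n-1}$, so there is an $\varepsilon$-net of cardinality at most $2^{n-1}$, which is precisely the claim $e_n(X\hr X)\le 4^{1/p}\cdot2^{-(n-1)/m}$. For the edge case where this $\varepsilon$ exceeds $1$ (small $n$), the bound is trivially true because $e_n(X\hr X)\le\|\id\|=1$, and $4^{1/p}\ge 1$ makes the inequality hold automatically; similarly the constant $2^{1+1/p}\le 4^{1/p}$ needed above holds since $1+1/p\le 2/p$ for $p\le 1$.

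The only genuine subtlety — and the step I expect to need the most care — is the volume-comparison bookkeeping: one must fix a linear identification $X\cong\R^m$ so that ``$\vol$'' makes sense, check that the constant $2$ (rather than the naive $1+\varepsilon/2^{1/p}$) is a legitimate containing radius uniformly in the relevant range of $\varepsilon$, and track the powers of $2^{1/p}$ from the $p$-triangle inequality so that the final constant is exactly $4^{1/p}$ and not something slightly larger. None of this is deep, but it is where an off-by-a-factor error would creep in, so I would write that estimate out explicitly. Everything else is the standard separated-set/covering duality plus monotonicity of $e_n$, which are recorded in the preliminaries above.
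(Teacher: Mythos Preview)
The paper does not give a proof of this lemma; it is quoted from \cite{HKV}. Your volumetric packing argument is the standard one and is essentially correct, but there is a slip in the containment step that you yourself flagged as delicate. In a $p$-Banach space the ordinary triangle inequality is not available, so from $\|x_j\|_X\le 1$ and $\|z\|_X\le \varepsilon/2^{1/p}$ you only get
\[
\|x_j+z\|_X^p \le 1+\frac{\varepsilon^p}{2},
\]
i.e.\ the disjoint small balls sit inside $(1+\varepsilon^p/2)^{1/p}B_X$, not inside $(1+\varepsilon/2^{1/p})B_X$ or $2B_X$. For $\varepsilon\le 2^{1/p}$ this containing radius is at most $2^{1/p}$ (not $2$), so the correct volume comparison reads
\[
M\cdot\Big(\frac{\varepsilon}{2^{1/p}}\Big)^m\vol(B_X)\le (2^{1/p})^m\vol(B_X),
\]
which gives $M\le (4^{1/p}/\varepsilon)^m$ directly, without the detour through $2^{1+1/p}\le 4^{1/p}$. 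Your displayed inequality with $2^m$ on the right is too strong for $p<1$ and is the point where the argument as written fails; fortunately the subsequent weakening you perform lands exactly on the correct bound, so the conclusion survives. Once this line is corrected the rest of your proof---setting $\varepsilon=4^{1/p}2^{-(n-1)/m}$ and handling small $n$ via $e_n\le 1$---goes through cleanly.
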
 

We will need Carl's inequality for the case of quasi-Banach spaces, which was recently proved in \cite{HKV}. Before we state the result recall that if $X,Y$ are quasi-Banach spaces and $T\in\mathcal L(X,Y)$, then the Gelfand numbers $c_n(T)$, $n\in\N$ are defined as
\[
c_n(T):= \inf_{M\subseteq X \atop \codi(M) <n}\sup_{x\in M\atop \|x\|_X\leq 1}\|Tx\|_Y.
\]
\begin{lemma} [\cite{HKV}, Theorem 1.1] \label{lem:carl} 
Let $p,\alpha>0$. Then there exists a constant $\gamma_{\alpha,p}\in(0,\infty)$ such that, for all $p$-Banach spaces $X,Y$ and any $T\in\mathcal L(X,Y)$,
\[
\sup_{1\leq k \leq n}k^\alpha e_k(T) \leq \gamma_{\alpha,p} \sup_{1\leq k \leq n}k^\alpha c_k(T).
\]
\end{lemma}

We will also essentially need the following recent result on Gelfand numbers of identities between Schatten classes (cf. also \cite{CD}).
\begin{lemma}[\cite{CK}, Lemma 2.1]\label{lem:gelfand}
	Let $0<p\le 1$, $p \le q \le 2$. Then, for all $n,N\in\N$ with $1\le n\le N^2$,
\begin{equation}\label{eq:GelfandSchattig}
c_n\big(\Sc_p^N\hr \Sc_q^N\big)\asymp_{p,q} \min\Bigl\{1, \frac{N}{n}\Bigr\}^{1/p-1/q}.
\end{equation}
\end{lemma}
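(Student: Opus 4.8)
The plan is to prove the two matching bounds separately, the point being that a subspace of $\R^{N\times N}$ of small codimension, respectively of large dimension, is forced to contain matrices of controlled rank. For the lower bound I would use the standard low-rank test space: put $r:=\lceil n/N\rceil$, so $1\le r\le N$ (since $n\le N^2$) and $r\le 2\max\{1,n/N\}$, and let $Z\subseteq\R^{N\times N}$ be the subspace of all matrices whose rows of index greater than $r$ vanish. Then $\dim Z=rN\ge n$, every $A\in Z$ has $\rank A\le r$, and H\"older's inequality applied to the (at most $r$) nonzero singular values gives $\|A\|_{\Sc_p^N}\le r^{1/p-1/q}\|A\|_{\Sc_q^N}$. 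For any $M\subseteq\R^{N\times N}$ with $\codim M<n$ one has $\dim M+\dim Z>N^2$, hence $M\cap Z\ne\{0\}$; scaling a nonzero element to have $\Sc_p^N$-norm $1$ shows that the supremum defining $c_n$ over this particular $M$ is at least $r^{-(1/p-1/q)}\gtrsim_{p,q}\min\{1,N/n\}^{1/p-1/q}$, and taking the infimum over $M$ gives $c_n(\Sc_p^N\hr\Sc_q^N)\gtrsim_{p,q}\min\{1,N/n\}^{1/p-1/q}$ for all $1\le n\le N^2$.

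\textbf{Upper bound, setup.} When $n$ is at most a fixed multiple of $N$ (in particular for $1\le n\le N$), the trivial estimate $c_n\le\|\id:\Sc_p^N\to\Sc_q^N\|\le1$ already matches the right-hand side up to a constant depending on $p,q$, so I may assume $n\ge C'N$ with $C'$ a large absolute constant. Set $r:=\lceil\varepsilon n/N\rceil$ with $\varepsilon>0$ a small absolute constant, so $1\le r\le N$ and $r\asymp n/N$, and choose an integer $m$ with $C_0rN\le m\le n-1$ — this is possible once $\varepsilon$ is small and $C'$ large, $C_0$ being the absolute constant in the low-rank restricted isometry property. By that property (Recht--Fazel--Parrilo; Cand\`es--Plan), a Gaussian map $\mathcal{A}:\R^{N\times N}\to\R^m$ with entries i.i.d.\ $\mathcal{N}(0,1/m)$ satisfies, with high probability, $\tfrac12\|X\|_{\Sc_2^N}\le\|\mathcal{A}X\|_2\le\tfrac32\|X\|_{\Sc_2^N}$ for every $X$ with $\rank X\le r$. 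Fix such an $\mathcal{A}$ and put $M:=\ker\mathcal{A}$, so $\codim M\le m<n$.

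\textbf{Upper bound, the null-space estimate.} Let $X\in M$ with $\|X\|_{\Sc_p^N}\le1$ and write $X=\sum_{j\ge0}X_{T_j}$, where $X_{T_j}$ keeps the singular values of $X$ with index in the block $T_j=\{jr+1,\dots,(j+1)r\}$ (and the corresponding singular vectors), so $\rank X_{T_j}\le r$. Monotonicity of the singular values yields the shelling bounds $\|X_{T_j}\|_{\Sc_2^N}\le r^{1/2-1/p}\|X_{T_{j-1}}\|_{\Sc_p^N}$ and $\|X_{T_j}\|_{\Sc_q^N}\le r^{1/q-1/p}\|X_{T_{j-1}}\|_{\Sc_p^N}$ for $j\ge1$, while $p\le1$ gives $\sum_{j\ge0}\|X_{T_j}\|_{\Sc_p^N}\le\big(\sum_{j\ge0}\|X_{T_j}\|_{\Sc_p^N}^p\big)^{1/p}=\|X\|_{\Sc_p^N}\le1$. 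Since $\mathcal{A}X=0$, we have $\mathcal{A}X_{T_0}=-\sum_{j\ge1}\mathcal{A}X_{T_j}$, and applying the lower restricted isometry bound to $X_{T_0}$ and the upper one to each $X_{T_j}$ (with the triangle inequality in $\ell_2^m$) gives $\|X_{T_0}\|_{\Sc_2^N}\le3\sum_{j\ge1}\|X_{T_j}\|_{\Sc_2^N}\le3r^{1/2-1/p}$. Because $q\le2$ and $\rank X_{T_0}\le r$, it follows that $\|X_{T_0}\|_{\Sc_q^N}\le r^{1/q-1/2}\|X_{T_0}\|_{\Sc_2^N}\le3r^{1/q-1/p}$, and adding the tail via the $\min(q,1)$-triangle inequality and the $\Sc_q^N$-shelling bound yields $\|X\|_{\Sc_q^N}\lesssim_{q}r^{1/q-1/p}\asymp_{p,q}(N/n)^{1/p-1/q}$, using $r\ge\varepsilon n/N$. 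Hence $c_n(\Sc_p^N\hr\Sc_q^N)\le\sup\{\|X\|_{\Sc_q^N}:X\in M,\ \|X\|_{\Sc_p^N}\le1\}\lesssim_{p,q}\min\{1,N/n\}^{1/p-1/q}$, which together with the lower bound completes the proof.

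\textbf{Main difficulty.} The delicate step is the upper bound, where one has to produce a subspace of codimension roughly $n$ on which $\|\cdot\|_{\Sc_q^N}$ is uniformly small on $B_p^N$; the only robust route I see is the probabilistic low-rank restricted isometry property. The numerology is tight: RIP of order $r\asymp n/N$ from only $m\le n-1$ measurements requires $m\gtrsim rN$, which forces $r$ to be a \emph{small} multiple of $n/N$ — this is precisely why the regime $n\lesssim N$ must be dispatched by the trivial bound, and why no logarithmic factor appears (the rank-$r$ secant set in $\R^{N\times N}$ has metric entropy of order $rN$, with no $\log(N/r)$ term, unlike the $s$-sparse vectors in $\R^N$). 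A minor, purely technical, nuisance is that for $p<1$ or $q<1$ the triangle inequalities must be replaced by their $p$- and $q$-versions, which affects only constants.
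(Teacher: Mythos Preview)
The paper does not give its own proof of this lemma; it is quoted verbatim from \cite{CK}, so there is nothing in the present paper to compare against. That said, your argument is correct and is in fact the route taken in \cite{CK}: the lower bound via intersecting an arbitrary subspace of codimension $<n$ with a space of matrices of rank at most $r\asymp n/N$, and the upper bound via a Gaussian measurement map satisfying the rank-$r$ restricted isometry property together with the standard block (``shelling'') decomposition of the singular values. Your numerology is right (rank-$r$ RIP from $m\asymp rN$ Gaussian measurements, no logarithmic loss), and the use of $p\le 1$ to pass from $\sum_j\|X_{T_j}\|_{\Sc_p^N}^p\le 1$ to $\sum_j\|X_{T_j}\|_{\Sc_p^N}\le 1$ is exactly what is needed. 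One small simplification: since the blocks $X_{T_j}$ carry disjoint groups of singular values, you actually have the identity $\|X\|_{\Sc_q^N}^q=\sum_j\|X_{T_j}\|_{\Sc_q^N}^q$, so no (quasi-)triangle inequality is needed when reassembling the tail.
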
 

Another type of $s$-numbers we will use are the Kolmogorov numbers. Given two quasi-Banach spaces $X$,$Y$ and $T\in\mathcal L(X,Y)$, the Kolmogorov numbers $d_n$, $n\in\N$ are defined as
\[
d_n(T):=\inf_{N\subseteq Y\atop \dim(N)<n}\,\sup_{\|x\|_X\leq 1}\,\inf_{z\in N}\|Tx-z\|_Y.
\]

\subsection{Interpolation properties of entropy numbers}

To prove the estimates in Theorem \ref{thm:main}, we need certain interpolation properties of the entropy numbers of a bounded linear map acting between quasi-Banach spaces, where one end point is fixed (see, for instance, \cite[Chapter 1.3]{ET}). Recall that a couple of quasi-Banach spaces $(X_0,X_1)$ is called an interpolation couple if both $X_0$ and $X_1$ are continuously embedded in a suitable quasi-Banach space.

\begin{lemma}[\cite{ET}, Theorem 1.3.2 (i)] \label{lem:interpol target}
	Let $X$ be a quasi-Banach space and $(Y_0,Y_1)$ be an interpolation couple of $p$-Banach spaces. Let $0<\theta<1$ and $Y_\theta$ be a quasi-Banach space such that $Y_0\cap Y_1 \subseteq Y_\theta \subseteq Y_0+Y_1$ and 
	\[
	\| y \|_{Y_\theta} \leq \|y\|_{Y_0}^{1-\theta}\cdot \|y\|_{Y_1}^{\theta},\qquad \text{for all } y\in Y_0\cap Y_1.
	\]
	Let $T\in\mathcal L(X,Y_0\cap Y_1)$, $Y_0\cap Y_1$ being equipped with the quasi-norm $\max\{ \|y\|_{Y_0}, \|y\|_{Y_1}\}$. Then, for all $k_0,k_1\in\N$,
	\begin{align*}
	e_{k_0+k_1-1}\big(T:X\to Y_{\theta}\big) & \leq 2^{1/p}\cdot e_{k_0}^{1-\theta}\big(T:X\to Y_{0}\big)\cdot e_{k_1}^{\theta}\big(T:X\to Y_{1}\big).
	\end{align*}
\end{lemma}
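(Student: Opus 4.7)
\noindent\textbf{Proof plan for Lemma \ref{lem:interpol target}.}
The natural strategy is the classical product-net argument: build a net for $T:X\to Y_\theta$ by refining two nets, one each for $T:X\to Y_0$ and $T:X\to Y_1$, and then control pairwise distances by the logarithmic-convexity inequality defining $Y_\theta$. First, fix arbitrary $\eps_0>e_{k_0}(T:X\to Y_0)$ and $\eps_1>e_{k_1}(T:X\to Y_1)$. By definition there exist points $\{y_i^{(0)}\}_{i=1}^{2^{k_0-1}}\subset Y_0$ and $\{y_j^{(1)}\}_{j=1}^{2^{k_1-1}}\subset Y_1$ such that
\[
T(B_X)\subseteq \bigcup_{i}\bigl(y_i^{(0)}+\eps_0 B_{Y_0}\bigr), \qquad T(B_X)\subseteq \bigcup_{j}\bigl(y_j^{(1)}+\eps_1 B_{Y_1}\bigr).
\]

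Next, for every pair $(i,j)$ such that the set $A_{i,j}:=\{x\in B_X:\|Tx-y_i^{(0)}\|_{Y_0}\le\eps_0,\ \|Tx-y_j^{(1)}\|_{Y_1}\le\eps_1\}$ is nonempty, pick one $x_{i,j}\in A_{i,j}$ and set $z_{i,j}:=Tx_{i,j}\in Y_0\cap Y_1\subseteq Y_\theta$. This produces at most $2^{k_0-1}\cdot 2^{k_1-1}=2^{(k_0+k_1-1)-1}$ candidate centers, exactly the budget allowed by the definition of $e_{k_0+k_1-1}$.

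Now, given any $x\in B_X$, choose $(i,j)$ so that $x\in A_{i,j}$. Since $Y_0, Y_1$ are $p$-Banach spaces, the $p$-triangle inequality applied via $Tx-Tx_{i,j}=(Tx-y_i^{(0)})-(Tx_{i,j}-y_i^{(0)})$ yields
\[
\|Tx-Tx_{i,j}\|_{Y_0}^{p}\le 2\eps_0^{p}, \qquad \|Tx-Tx_{i,j}\|_{Y_1}^{p}\le 2\eps_1^{p},
\]
so $\|Tx-Tx_{i,j}\|_{Y_0}\le 2^{1/p}\eps_0$ and $\|Tx-Tx_{i,j}\|_{Y_1}\le 2^{1/p}\eps_1$. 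Because $Tx-Tx_{i,j}\in Y_0\cap Y_1$, the assumed log-convexity estimate for the $Y_\theta$-quasi-norm gives
\[
\|Tx-Tx_{i,j}\|_{Y_\theta}\le \|Tx-Tx_{i,j}\|_{Y_0}^{1-\theta}\,\|Tx-Tx_{i,j}\|_{Y_1}^{\theta}\le 2^{1/p}\,\eps_0^{1-\theta}\eps_1^{\theta}.
\]
Hence $T(B_X)$ is covered in $Y_\theta$ by at most $2^{(k_0+k_1-1)-1}$ balls of radius $2^{1/p}\eps_0^{1-\theta}\eps_1^{\theta}$, so $e_{k_0+k_1-1}(T:X\to Y_\theta)\le 2^{1/p}\eps_0^{1-\theta}\eps_1^{\theta}$; letting $\eps_0,\eps_1$ decrease to the respective entropy numbers yields the claim.

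The only subtle point is the bookkeeping of the $p$-norm constant, which must come out exactly as $2^{1/p}$ (not $2^{2/p}$): this relies on comparing $Tx$ and $Tx_{i,j}$ through the single center $y_i^{(0)}$ (resp.\ $y_j^{(1)}$) rather than chaining two quasi-triangle inequalities, and on the fact that in a $p$-Banach space $\|a-b\|^p\le\|a\|^p+\|b\|^p$ directly. Everything else (that $T$ maps into $Y_0\cap Y_1$ so that differences stay in $Y_0\cap Y_1$ and the interpolation inequality is applicable; the counting of pairs matching $2^{(k_0+k_1-1)-1}$) is straightforward from the hypotheses.
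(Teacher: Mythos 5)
Your proof is correct and is essentially the standard product-net argument; note that the paper does not prove this lemma itself but cites it as \cite[Theorem~1.3.2(i)]{ET}, and your argument reproduces the reasoning one finds there. The key points --- counting $2^{k_0-1}\cdot 2^{k_1-1}=2^{(k_0+k_1-1)-1}$ centers taken from $T(B_X)\subseteq Y_0\cap Y_1$, controlling $\|Tx-Tx_{i,j}\|_{Y_i}^p\le 2\varepsilon_i^p$ via the $p$-triangle inequality through the single center $y_i^{(\cdot)}$, and then invoking the log-convexity of $\|\cdot\|_{Y_\theta}$ --- are exactly right and give the constant $2^{1/p}$ as required.
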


Since Schatten $p$-norms satisfy H\"older's inequality, we immediately obtain the following corollary.

\begin{cor}\label{cor:interpol target}
 Let $N\in\N$, $0<q_0,q_1,q \le \infty$ and $0<\theta<1$ be such that $\frac1q = \frac{1-\theta}{q_0} + \frac{\theta}{q_1}$.
 Then, for all $k_0,k_1\in\N$,
	\begin{align*}
	e_{k_0+k_1-1}\big(\Sc_p^N \hr \Sc_q^N) & \leq 2^{1/\min\{q_0,q_1,1\}}\cdot e_{k_0}^{1-\theta}\big(\Sc_p^N \hr \Sc_{q_0}^N\big)\cdot e_{k_1}^{\theta}\big(\Sc_p^N \hr \Sc_{q_1}^N\big).
	\end{align*}
\end{cor}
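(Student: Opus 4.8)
The plan is to derive Corollary \ref{cor:interpol target} directly from Lemma \ref{lem:interpol target} by specializing the interpolation couple $(Y_0,Y_1)$ to the pair of Schatten classes $(\Sc_{q_0}^N,\Sc_{q_1}^N)$ and the space $X$ to $\Sc_p^N$. First I would check that $(\Sc_{q_0}^N,\Sc_{q_1}^N)$ is an interpolation couple of $p_0$-Banach spaces, where $p_0=\min\{q_0,q_1,1\}$: both spaces live on the common $N^2$-dimensional vector space $\R^{N\times N}$, hence are continuously embedded in it, and each $\Sc_{q_i}^N$ is an $r_i$-Banach space for $r_i=\min\{q_i,1\}$ (as recalled in the subsection on Schatten classes), so both are $p_0$-Banach spaces. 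Thus Lemma \ref{lem:interpol target} applies with the value $p_0$ in the role of $p$, which explains the exponent $2^{1/\min\{q_0,q_1,1\}}$ in the statement.

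Next I would identify $\Sc_q^N$ with the interpolated space $Y_\theta$. Since the underlying vector space is the same for all three, the inclusions $Y_0\cap Y_1\subseteq Y_\theta\subseteq Y_0+Y_1$ hold trivially, and the only substantive point is the norm interpolation inequality
\[
\|A\|_{\Sc_q^N}\le \|A\|_{\Sc_{q_0}^N}^{1-\theta}\cdot\|A\|_{\Sc_{q_1}^N}^{\theta},\qquad \tfrac1q=\tfrac{1-\theta}{q_0}+\tfrac{\theta}{q_1}.
\]
This is exactly H\"older's inequality applied to the singular value sequences: writing $\|A\|_{\Sc_q^N}=\|(\sigma_j(A))_j\|_q$ and splitting the exponent $q$ via $\frac1q=\frac{1-\theta}{q_0}+\frac{\theta}{q_1}$, one factors $\sigma_j(A)^q=\sigma_j(A)^{q(1-\theta)}\cdot\sigma_j(A)^{q\theta}$ and applies H\"older with conjugate exponents $\frac{q_0}{q(1-\theta)}$ and $\frac{q_1}{q\theta}$ (interpreting the endpoints $q_i=\infty$ as sup-norms in the usual way). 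This is the standard log-convexity of $\ell_q$-norms in the parameter $1/q$, transported to the singular values, and is the one small computation the corollary rests on.

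With these two identifications in place, the conclusion of Lemma \ref{lem:interpol target} reads precisely
\[
e_{k_0+k_1-1}\big(\Sc_p^N\hr\Sc_q^N\big)\le 2^{1/p_0}\cdot e_{k_0}^{1-\theta}\big(\Sc_p^N\hr\Sc_{q_0}^N\big)\cdot e_{k_1}^{\theta}\big(\Sc_p^N\hr\Sc_{q_1}^N\big),
\]
which is the claimed inequality. There is no genuine obstacle here; the corollary is essentially a dictionary translation. The only point requiring a modicum of care is the bookkeeping at the endpoints $q_0,q_1\in\{1,\infty\}$ or when some $q_i<1$, to make sure the $p$-Banach exponent fed into Lemma \ref{lem:interpol target} is indeed $\min\{q_0,q_1,1\}$ and that H\"older's inequality for the singular values is read correctly when an exponent is infinite; all of this is routine. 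One should also note implicitly that the identity map $\Sc_p^N\to\Sc_{q_0}^N\cap\Sc_{q_1}^N$ is bounded (it is, the spaces being finite-dimensional), so that the hypothesis $T\in\mathcal L(X,Y_0\cap Y_1)$ of Lemma \ref{lem:interpol target} is satisfied.
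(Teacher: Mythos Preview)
Your proposal is correct and follows exactly the paper's approach: the paper simply remarks that the corollary is immediate from Lemma~\ref{lem:interpol target} since Schatten $p$-norms satisfy H\"older's inequality, and you have spelled out precisely this, including the bookkeeping that identifies $\min\{q_0,q_1,1\}$ as the $p$-Banach exponent.
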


To formulate the analogue result for interpolation in the domain space, we recall that the $K$-functional for an interpolation couple $(X_0,X_1)$ is given by
\[
 K(t,x) = \inf \bigl\{ \| x_0 \|_{X_0} + t \|x_1\|_{X_1} \, :\, x=x_0+x_1,\, x_0\in X_0,\, x_1 \in X_1\bigr\}.
\]

\begin{lemma}[\cite{ET}, Theorem 1.3.2 (ii)] \label{lem:interpol domain}
	Let $(X_0,X_1)$ be an interpolation couple of quasi-Banach spaces and let $Y$ be a $p$-Banach space. Let $0<\theta<1$ and let $X$ be a quasi-Banach space such that $X\subseteq X_0+X_1$ and
	\[
	t^{-\theta}K(t,x) \leq \|x\|_X,\qquad \text{for all }x\in X,~ t\in(0,\infty).
	\] 
	Let $T:X_0+X_1 \to Y$ be linear and such that its restrictions to $X_0$ and $X_1$ are continuous. Then its restriction to $X$ is also continuous and, for all $k_0,k_1\in\N$,
	\begin{align*}
	e_{k_0+k_1-1}\big(T:X\to Y\big) & \leq 2^{1/p}\cdot e_{k_0}^{1-\theta}\big(T:X_0\to Y\big)\cdot e_{k_1}^{\theta}\big(T:X_1\to Y\big).
	\end{align*}
\end{lemma}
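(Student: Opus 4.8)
The plan is to run the classical ``product net'' argument for interpolation of entropy numbers, this time in the domain variable. Fix $k_0,k_1\in\N$ and choose $\eps_0>e_{k_0}(T:X_0\to Y)$ and $\eps_1>e_{k_1}(T:X_1\to Y)$; note that $\eps_0,\eps_1>0$. By the definition of the entropy numbers there are points $u_1,\dots,u_{2^{k_0-1}}\in Y$ with $T(B_{X_0})\subseteq\bigcup_{j}(u_j+\eps_0 B_Y)$ and points $v_1,\dots,v_{2^{k_1-1}}\in Y$ with $T(B_{X_1})\subseteq\bigcup_{i}(v_i+\eps_1 B_Y)$.

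Next, fix a parameter $t>0$ (to be chosen) and a small $\delta>0$. Given $x\in B_X$, the inclusion $X\subseteq X_0+X_1$ together with $t^{-\theta}K(t,x)\le\|x\|_X\le1$ provides a splitting $x=x_0+x_1$ with $x_0\in X_0$, $x_1\in X_1$ and $\|x_0\|_{X_0}+t\,\|x_1\|_{X_1}\le K(t,x)+\delta\le t^{\theta}+\delta$; in particular $\|x_0\|_{X_0}\le a:=t^{\theta}+\delta$ and $\|x_1\|_{X_1}\le b:=t^{\theta-1}+\delta/t$. Since $x_0/a\in B_{X_0}$ and $x_1/b\in B_{X_1}$, there are indices $j,i$ with $\|Tx_0-a\,u_j\|_Y\le a\eps_0$ and $\|Tx_1-b\,v_i\|_Y\le b\eps_1$, and, using that $Y$ is a $p$-Banach space,
\[
\|Tx-(a\,u_j+b\,v_i)\|_Y^p\le\|Tx_0-a\,u_j\|_Y^p+\|Tx_1-b\,v_i\|_Y^p\le(a\eps_0)^p+(b\eps_1)^p.
\]
Taking instead $t=\delta=1$ and disregarding the nets, the same splitting gives $\|Tx\|_Y^p\le 2^p\big(\|T:X_0\to Y\|^p+\|T:X_1\to Y\|^p\big)$ for all $x\in B_X$, so $T|_X$ is bounded, hence continuous. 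The $2^{k_0-1}\cdot 2^{k_1-1}=2^{(k_0+k_1-1)-1}$ points $a\,u_j+b\,v_i$ do not depend on $x$, whence
\[
e_{k_0+k_1-1}(T:X\to Y)\le\big((a\eps_0)^p+(b\eps_1)^p\big)^{1/p}.
\]

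Finally, I optimize: letting $\delta\downarrow0$ gives $e_{k_0+k_1-1}(T:X\to Y)\le\big(t^{\theta p}\eps_0^p+t^{(\theta-1)p}\eps_1^p\big)^{1/p}$ for every $t>0$, and the choice $t=\eps_1/\eps_0$ equalizes the two summands and turns the bound into $2^{1/p}\,\eps_0^{1-\theta}\eps_1^{\theta}$. Letting $\eps_0\downarrow e_{k_0}(T:X_0\to Y)$ and $\eps_1\downarrow e_{k_1}(T:X_1\to Y)$ yields the assertion.

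There is no genuine obstacle here; the argument is mostly bookkeeping. The points deserving care are that the product net has exactly $2^{(k_0+k_1-1)-1}$ elements (which is what pins down the index $k_0+k_1-1$); that the $p$-triangle inequality in $Y$, combined with the balancing of the two terms, produces precisely the constant $2^{1/p}$; and that the limiting argument in $\delta$ is needed to remove the slack coming from the infimum in the definition of $K(t,x)$, so that the constant stays $2^{1/p}$ rather than degrading to $2^{1+1/p}$. Choosing $t=\eps_1/\eps_0$ to balance the two terms is the only extra trick.
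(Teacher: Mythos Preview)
Your proof is correct and is essentially the standard argument (as given in \cite{ET}, from which the paper simply quotes the lemma without proof): split each $x\in B_X$ via the $K$-functional assumption, cover the two pieces using the $\eps_0$- and $\eps_1$-nets for $T(B_{X_0})$ and $T(B_{X_1})$, combine via the $p$-triangle inequality in $Y$, and balance by choosing $t=\eps_1/\eps_0$. The bookkeeping you flag (the product net has $2^{(k_0+k_1-1)-1}$ centres independent of $x$, the $\delta$-limit removes the slack in the $K$-functional, and the balancing gives exactly $2^{1/p}$) is handled correctly.
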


Using the singular value decomposition and the corresponding $K$-functional estimate for $\ell_p^N$-norms it is easy to see that, for $X_i=\Sc_{p_i}^N$ with $i\in\{0,1\}$ and $X=\Sc_p^N$ with  $\frac1p = \frac{1-\theta}{p_0} + \frac{\theta}{p_1}$, the assumption of Lemma \ref{lem:interpol domain} is satisfied up to a constant only depending on the parameters $p_0,p_1,\theta$.
\begin{cor}\label{cor:interpol domain}
 Let $N\in\N$, $0<p_0,p_1,p \le \infty$ and $0<\theta<1$ be such that $\frac1p = \frac{1-\theta}{p_0} + \frac{\theta}{p_1}$.
 Then, for all $k_0,k_1\in\N$,
	\begin{align*}
	e_{k_0+k_1-1}\big(\Sc_p^N \hr \Sc_q^N) & \leq c_{p_0,p_1,\theta,q} \cdot e_{k_0}^{1-\theta}\big(\Sc_{p_0}^N \hr \Sc_q^N\big)\cdot e_{k_1}^{\theta}\big(\Sc_{p_1}^N \hr \Sc_q^N\big).
	\end{align*}
\end{cor}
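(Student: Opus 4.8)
\emph{Proof plan.} The plan is to deduce the corollary directly from Lemma \ref{lem:interpol domain}, applied to the interpolation couple $(X_0,X_1)=(\Sc_{p_0}^N,\Sc_{p_1}^N)$, the target space $Y=\Sc_q^N$, and $X=\Sc_p^N$. The space $Y=\Sc_q^N$ is a $\min\{q,1\}$-Banach space (for $q<1$ after passing, via Aoki--Rolewicz, to an equivalent $q$-norm), so Lemma \ref{lem:interpol domain} is applicable and contributes the factor $2^{1/\min\{q,1\}}$. The only hypothesis that needs to be verified is the $K$-functional bound $t^{-\theta}K(t,A)\le\|A\|_{\Sc_p^N}$ for the couple $(\Sc_{p_0}^N,\Sc_{p_1}^N)$, and I expect this to hold only up to a constant $c=c(p_0,p_1,\theta)$. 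That is harmless: replacing $\|\cdot\|_{\Sc_p^N}$ by $c\,\|\cdot\|_{\Sc_p^N}$ rescales $T(B_X)$ by $1/c$ and hence each entropy number by $1/c$, so the factor $c$ is absorbed into the constant $c_{p_0,p_1,\theta,q}$ in the conclusion.

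To verify the $K$-functional bound, I would first reduce to the commutative case via the singular value decomposition. Writing $A=U\Sigma V^T$ with $\Sigma=\diag(\sigma(A))$, any decomposition $\sigma(A)=x_0+x_1$ of the vector of singular values in $\R^N$ induces a decomposition $A=A_0+A_1$ with $A_i:=U\diag(x_i)V^T$; since the Schatten norms are unitarily invariant, the singular values of $A_i$ are the moduli of the entries of $x_i$, so $\|A_i\|_{\Sc_{p_i}^N}=\|x_i\|_{\ell_{p_i}^N}$. Taking the infimum over all such decompositions of $\sigma(A)$ yields
\[
K\big(t,A;\Sc_{p_0}^N,\Sc_{p_1}^N\big)\ \le\ K\big(t,\sigma(A);\ell_{p_0}^N,\ell_{p_1}^N\big).
\]
Only this ``easy'' inequality is needed; no reverse estimate is required.

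It then remains to invoke the classical fact that $\ell_p^N$ embeds into the real interpolation space $(\ell_{p_0}^N,\ell_{p_1}^N)_{\theta,\infty}$ with a constant depending only on $p_0,p_1,\theta$ and, in particular, independent of $N$. This is the standard identity $(\ell_{p_0},\ell_{p_1})_{\theta,p}=\ell_p$ for the stated relation between the parameters; the corresponding estimate $\sup_{t>0}t^{-\theta}K(t,x;\ell_{p_0},\ell_{p_1})\le c(p_0,p_1,\theta)\|x\|_{\ell_p}$ restricts to $\R^N$ with the same constant, since restricting a decomposition to the first $N$ coordinates does not increase the $\ell_{p_i}$-norms. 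Applying this with $x=\sigma(A)$ and using $\|\sigma(A)\|_{\ell_p^N}=\|A\|_{\Sc_p^N}$ gives exactly the required bound (up to the constant $c$). Feeding this into Lemma \ref{lem:interpol domain} produces the claimed inequality with $c_{p_0,p_1,\theta,q}=c(p_0,p_1,\theta)\cdot 2^{1/\min\{q,1\}}$.

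The only genuine point of care is that Lemma \ref{lem:interpol domain} demands the $K$-functional hypothesis without a constant and $Y$ to be a $p$-Banach space; both are dispatched by the rescaling remark above and the Aoki--Rolewicz renorming of $\Sc_q^N$. Everything else is bookkeeping: the $\ell_p$ interpolation estimate is classical, and the transfer from $\ell_p$ to $\Sc_p$ through the singular value decomposition is the same device already used implicitly for Corollary \ref{cor:interpol target}.
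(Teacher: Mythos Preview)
Your proposal is correct and follows precisely the route sketched in the paper: reduce the $K$-functional hypothesis of Lemma \ref{lem:interpol domain} to the corresponding $\ell_p^N$ estimate via the singular value decomposition, invoke the classical interpolation inequality for $\ell_p$ spaces (which holds with a dimension-free constant), and absorb the resulting constant into $c_{p_0,p_1,\theta,q}$. The paper gives exactly this argument in the sentence preceding the corollary, only more tersely; your observations about the rescaling to accommodate the constant and the $\min\{q,1\}$-Banach structure of $\Sc_q^N$ fill in the routine details.
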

Lemma \ref{lem:interpol target} and Lemma \ref{lem:interpol domain} show that entropy numbers behave well with respect to interpolation
in the domain as well as in the target space. On the other hand, by the famous result of Edmunds and Netrusov \cite{EN},
this is not the case when interpolating simultaneously in the domain and in the target space.
We mention also that interpolation properties for Schatten classes in the Banach space case were already established in \cite{Pi71}. For generalizations to non-commutative $L_p$-spaces, see also \cite{PiXu}.

\section{Proof of Theorem \ref{thm:main}} \label{s:3} 

In this section, we present the proof of Theorem \ref{thm:main}.
We subdivide it into several cases, which are presented in the subsections below. These cases cover all the upper and lower bounds in Theorem \ref{thm:main}. 

\subsection{Upper bound in the case   $0<q \leq p \leq \infty$}

Here we use factorization and the entropy estimate for finite dimensional spaces given in Lemma \ref{lem:entropy_finite}. First,  Lemma \ref{lem:entropy_finite} shows that for all $n\in N$ and all $p>0$
\[
e_n\big(\Sc_p^N \hr \Sc_p^N \big)\le C_p 2^{-\frac{n-1}{N^2}}=C_p 2^{-\frac{n}{N^2}}2^{\frac{1}{N^2}}\le 2C_p2^{-\frac{n}{N^2}},
\]
with $C_p=4^{1/\min\{1,p\}}$. This is already the required bound for $p=q$.
Now we use the factorization 
$\Sc_p^N \hr \Sc_p^N \hr \Sc_q^N$ and obtain
\[
e_n\big(\Sc_p^N\hr \Sc_q^N \big) 
\le 
e_n\big(\Sc_p^N\hr \Sc_p^N\big)\cdot \|\Sc_p^N\hr \Sc_q^N\| 
\le 
2C_p2^{-\frac{n}{N^2}}N^{1/q-1/p}.
\]
This shows, up to constant, the optimal upper bound in the full range.

\subsection{Lower bound in the cases $0<q \leq p \leq \infty$ and $0<p \leq q \leq \infty, N^2 \le n$}

These bounds follow from a volume comparison argument.
Let us assume that
$B_p^N$ is covered by $2^{n-1}$ translates of the ball $\varepsilon\cdot B^N_q$, $\varepsilon>0$. Then
\begin{align*}
\vol(B_p^N)\le 2^{n-1}\vol(\varepsilon\cdot B^N_q)=2^{n-1}\varepsilon^{N^2}\vol(B^N_q)
\end{align*}
and the volume estimates for the unit balls of Schatten classes in Lemma \ref{lem:schatten-volume} imply
\begin{align*}
c_{p,q}N^{1/q-1/p}\le \left(\frac{\vol(B_p^N)}{\vol(B_q^N)}\right)^{1/N^2}\le 2^{(n-1)/N^2}\varepsilon.
\end{align*}
Hence, by the very definition of the entropy numbers,
\[
e_n\big(\Sc_p^N \hr \Sc_q^N \big)\ge c_{p,q} 2^{-\frac{n-1}{N^2}}N^{1/q-1/p}\ge c_{p,q} 2^{-\frac{n}{N^2}}N^{1/q-1/p}.
\]
This establishes the lower bound in the case $0<q\leq p \leq \infty$ for the full range of $n,N\in\N$ and the estimate is also of the right order when $0<p\leq q \leq \infty$
and $N^2 \leq n$.

\subsection{Upper bound in the case $0<p \leq q \leq \infty, N^2 \le n$}

The proof for this bound is again based on a standard volume argument. In the following, let  $\bar{r}:=\min\{1,r\}$ for some $0< r \leq \infty$. We define
\[
\varepsilon = c_{p,q}2^{-\frac{n}{N^2}}N^{1/q-1/p},
\] 
where $c_{p,q}>0$ is some constant to be specified later. We choose a maximal $\varepsilon$-net $\mathcal N\subseteq B_p^N$ in the $\|\cdot\|_{\Sc_q^N}$ sense, i.e., for all $A,B\in\mathcal N$ with $A\neq B$, we have $\| A-B\|_{\Sc_q^N}>\varepsilon$. 
Then,
\[
B_p^N \subseteq \bigcup_{A\in\mathcal N} \big(A+\varepsilon B_q^N\big).
\]
For $n\geq \lambda_{p,q}N^2$, where $\lambda_{p,q}\in(0,\infty)$ is some constant depending only on $p$ and $q$, we will show that
\begin{align}\label{eq:cardinality net}
|\mathcal N|\leq 2^{n-1}.
\end{align} 
 This will imply the desired bound for $e_n$. First, we observe that the $\bar q$-triangle inequality implies that the balls
\[
A+\frac{\varepsilon}{2^{1/\bar{q}}} B_q^N,\quad A\in\mathcal N
\]
are disjoint. Since $B_q^N\subseteq N^{1/p-1/q}B_p^N$, it holds for all $A\in\mathcal N$ that
\begin{align*}
A+\frac{\varepsilon}{2^{1/\bar{q}}} B_q^N & \subseteq B_p^N + \frac{\varepsilon}{2^{1/\bar{q}}} B_q^N  \subseteq B_p^N + \frac{\varepsilon N^{1/p-1/q}}{2^{1/\bar{q}}} B_p^N \cr
& = B_p^N + \frac{c_{p,q}2^{-\frac{n}{N^2}}}{2^{1/\bar{q}}} B_p^N 
\subseteq \gamma\cdot B_p^N,
\end{align*}
where, in the last inclusion, we used the $\bar p$-triangle inequality, and defined
\[
\gamma:=\gamma(p,q,n,N):= \Big(1+ c_{p,q}^{\bar p}2^{(-\frac{n}{N^2}-\frac{1}{\bar q})\bar p}\Big)^{1/\bar p}.
\]
By volume comparison, we conclude
\begin{align*}
|\mathcal N| \cdot \vol\Big(\frac{\varepsilon}{2^{1/\bar q}}B_q^N\Big) \leq \vol\Big(\gamma\, B_p^N\Big). 
\end{align*}
Taking the $N^2$-root, using Lemma \ref{lem:schatten-volume} and the definitions of $\varepsilon$ and $\gamma$, we obtain
\begin{align*}
|\mathcal N|^{1/N^2} \leq \frac{\alpha_{p,q}}{c_{p,q}}\Big(2^{(1/\bar q +n/N^2)\bar p}+ c_{p,q}^{\bar p}\Big)^{1/\bar p},
\end{align*}
where $\alpha_{p,q}\in(0,\infty)$ depends on the constants in the volume estimates in Lemma \ref{lem:schatten-volume}. To show \eqref{eq:cardinality net}, we have to choose $c_{p,q},\lambda_{p,q}\in(0,\infty)$ such that
\[
\frac{\alpha_{p,q}}{c_{p,q}}\Big(2^{(1/\bar q +n/N^2)\bar p}+ c_{p,q}^{\bar p}\Big)^{1/\bar p} \leq 2^{\frac{n-1}{N^2}},
\]
which is equivalent to
\begin{align}\label{eq:net estimate}
\alpha_{p,q}^{\bar p}2^{\bar p/\bar q} \leq c_{p,q}^{\bar p}\Big(2^{-\bar p/N^2}-\alpha_{p,q}^{\bar p}2^{-\bar pn/N^2}\Big).
\end{align}
Obviously, $2^{-\bar p/N^2} \geq 2^{-\bar p}$ and, for suitable $\lambda_{p,q}\in(0,\infty)$, we have $\alpha_{p,q}^{\bar{p}}2^{-\bar{p} n/N^2} \leq 4^{-\bar p}$. Therefore, we can choose a suitably large constant $c_{p,q}\in(0,\infty)$ such that the inequality \eqref{eq:net estimate} is satisfied. This completes the proof for $n\geq \lambda_{p,q}N^2$. To close the gap left, observe that, for $N^2 \leq n \leq \lambda_{p,q}N^2$, the monotonicity of the entropy numbers and the upper estimate for $e_{N^2}\big(\Sc_p^N\hr \Sc_q^N\big)$ (see next subsection), gives
\[
e_{n}\big(\Sc_p^N\hr \Sc_q^N\big) \leq e_{N^2}\big(\Sc_p^N\hr \Sc_q^N\big) \leq c_{p,q}^{'}N^{1/q-1/p} \leq c_{p,q}^{''} 2^{-n/N^2}N^{1/q-1/p},
\]
where $c_{p,q}^{'}, c_{p,q}^{''}\in(0,\infty)$.

\subsection{Upper bound in the case $0<p \leq q \leq \infty, 1 \le n \le N^2$}

For $1\le n \le 2N$, this bound follows trivially from 
\[ 
e_n\big(\Sc_p^N \hr \Sc_q^N \big) \le \| \Sc_p^N \hr \Sc_q^N \| = 1.
\]
For $2N \le n \le N^2$ we resort to the Gelfand number estimate in Lemma
\ref{lem:gelfand}, the Carl inequality in Lemma \ref{lem:carl} to transfer this into an entropy estimate and, finally, to interpolation to cover the remaining cases for $p$ and $q$.

We will show that, for all $N \leq n \leq N^2$,
\[
e_{2n}\big(\Sc_p^N\hr \Sc_q^N\big) \leq c_{p,q}\Big(\frac{N}{n}\Big)^{1/p-1/q}.
\] 
The gaps for odd $n$ can be filled as in the last subsection, by suitably increasing the constant. To prove the desired estimate in the range $N \leq n \leq N^2$, we consider four different cases.
\vskip 1mm
\noindent\textbf{Case $0<p\leq 1$ and $p\leq q \leq 2$:}
Recall that, by Lemma \ref{lem:gelfand}, 
\[
c_n(\Sc_p^N\hr \Sc_q^N) \leq c_{p,q} \Big(\frac{N}{n}\Big)^{1/p-1/q}.
\]
Therefore, by Carl's inequality (see Lemma \ref{lem:carl}) used with $\alpha=1/p-1/q$ there, we obtain
\[
n^{1/p-1/q}e_n\big(\Sc_p^N\hr \Sc_q^N\big) \leq c_{p,q} \,\sup_{k\leq n}\,\min\{k,N \}^{1/p-1/q} = c_{p,q}\,N^{1/p-1/q}.
\]
Hence,
\begin{align}\label{eq:e_n p<1 q<2}
e_n\big(\Sc_p^N\hr \Sc_q^N\big) \leq c_{p,q}\Big(\frac{N}{n}\Big)^{1/p-1/q}.
\end{align}
\vskip 1mm
\noindent\textbf{Case $0<p\leq 1$ and $q=\infty$:}
Factorizing $\Sc_p^N\hr \Sc_\infty^N $ through $\Sc_2^N$, we obtain
\[
e_{2n}\big(\Sc_p^N\hr \Sc_\infty^N\big) \leq e_n\big(\Sc_p^N\hr \Sc_2^N\big)\cdot e_n\big(\Sc_2^N \hr \Sc_\infty^N\big).
\]
From \eqref{eq:e_n p<1 q<2}, we get
\begin{align}\label{eq:before duality}
e_{2n}\big(\Sc_p^N\hr \Sc_\infty^N\big) \leq c_{p}\Big(\frac{N}{n}\Big)^{1/p-1/2}\cdot e_n\big(\Sc_2^N \hr \Sc_\infty^N\big).
\end{align}
To estimate the second factor in \eqref{eq:before duality}, we use the duality of Gelfand and Kolmogorov numbers and again Lemma \ref{lem:gelfand}. This gives
\[
d_n\big(\Sc_2^N \hr \Sc_\infty^N\big) = c_n\big(\Sc_1^N \hr \Sc_2^N\big) \leq c\Big( \frac{N}{n}\Big)^{1/2}.
\] 
Again, using Carl's inequality with Kolmogorov numbers, we obtain
\[
e_n\big(\Sc_2^N \hr \Sc_\infty^N\big) \leq c\Big( \frac{N}{n}\Big)^{1/2}.
\]
Together with the estimate \eqref{eq:before duality}, this yields
\begin{align}\label{eq: e_2n p to infinity}
e_{2n}\big(\Sc_p^N\hr \Sc_\infty^N\big) \leq c_{p}\Big(\frac{N}{n}\Big)^{1/p}.
\end{align}
This completes the proof in this case.
\vskip 1mm
\noindent\textbf{Case $0<p\leq 1$ and $p< q\leq \infty$:}
We now use interpolation in the target space (see Corollary \ref{cor:interpol target}) and obtain the estimate
\[
e_{2n}\big(\Sc_p^N\hr \Sc_q^N\big) \leq 2^{1/\bar{q}}\|\Sc_p^N\hr \Sc_p^N\|^{1-\theta}\cdot e_{2n}\big(\Sc_p^N\hr \Sc_\infty^N\big)^\theta \leq 2^{1/\bar{q}}e_{2n}\big(\Sc_p^N\hr \Sc_\infty^N\big)^\theta,
\]
with $\frac{1-\theta}{p}+\frac{\theta}{\infty}=\frac{1}{q}$. Using the upper bound in \eqref{eq: e_2n p to infinity}, we have
\begin{align}\label{eq: case p<1 and p<q<infinity}
e_{2n}\big(\Sc_p^N\hr \Sc_q^N\big) \leq c_{p,q}\Big(\frac{N}{n}\Big)^{\theta/p}=c_{p,q}\Big(\frac{N}{n}\Big)^{1/p-1/q}.
\end{align}
\vskip 1mm
\noindent\textbf{Case $1<p< q\leq \infty$:}
Here we use interpolation in the domain space (see Corollary \ref{cor:interpol domain}. Then,
\begin{align}
e_{2n}\big(\Sc_p^N\hr \Sc_q^N\big) \nonumber& \leq c_{p,q}\cdot e_{2n}\big(\Sc_1^N\hr \Sc_q^N\big)^{1-\theta}\|\Sc_q^N\hr \Sc_q^N\|^\theta \\
& \leq c_{p,q}\cdot e_{2n}\big(\Sc_1^N\hr \Sc_q^N\big)^{1-\theta},
\end{align}
where $\frac{1-\theta}{1}+\frac{\theta}{q}=\frac{1}{p}$ and with $c_{p,q}\in(0,\infty)$ being the constant from Corollary \ref{cor:interpol domain}.
Using the estimate obtained in the previous case (see inequality \eqref{eq: case p<1 and p<q<infinity}) with $p=1$ there, we get
\[
e_{2n}\big(\Sc_p^N\hr \Sc_q^N\big)\leq c_{p,q} \Big(\frac{N}{n}\Big)^{(1-1/q)(1-\theta)} = c_{p,q}\Big(\frac{N}{n}\Big)^{1/p-1/q}.
\]
This completes the proof for all $0<p\leq q \leq \infty$.

\subsection{Lower bound in the case $0<p \leq q \leq \infty, 1 \le n \le N^2$}

Note that the lower bound for $1\le n \le N$ trivially follows if the lower bound for $N\le n \le N^2$ is proved. So we assume $N \le n \le N^2$.

Let $1\leq k \leq N/2$, $q\ge 1$ and $0<\delta < k^{1/q}$. It follows from Lemma \ref{lem:volume metric balls grassmann schatten} that, for all $F\in G_{N,k}$, 
\[
\mu_{N,k}\big(B(F,\delta)\big) \leq \Big(\frac{\delta}{ck^{1/q}}\Big)^{k(N-k)}
\]
for some absolute constant $c\in(0,\infty)$, where $B(F,\delta):=\{E\in G_{N,k}\,:\, \|P_E-P_F\|_{\Sc_q^N} < \delta \}$.
We may assume that $c<4$ and choose $\delta=\frac{ck^{1/q}}{4} < k^{1/q}$. We conclude that
\[
\mu_{N,k}\big(B(F,\delta)\big) \leq 4^{-k(N-k)}=2^{-2k(N-k)} \leq 2^{-kN}.
\]
It follows that there exist a set ${\cal N} \subseteq G_{N,k}$ with cardinality $2^{kN}$ 
such that $\|P_E-  P_F\|_{\Sc_q^N} \geq \delta$ or, equivalently,
\[
 \|k^{-1/p} P_E- k^{-1/p} P_F\|_{\Sc_q^N} \geq k^{-1/p}\delta = \frac{ck^{1/q-1/p}}{4},
\] 
for distinct $E,F \in {\cal N}$. Since $k^{-1/p} P_E \in B_p^N$ for $E\in G_{N,k}$, we obtain
\[
e_{kN}\big(\Sc_p^N\hr \Sc_q^N\big)\geq \frac{ck^{1/q-1/p}}{8}.
\]
This shows the required lower bound in the case $n=kN$ and $0<p\le q \le \infty$ and $q\ge 1$. For $(k-1)N \le n \le kN$, the lower bound follows from the monotonicity of entropy numbers. The same is true for $N^2/2\le n \le N^2$ taking into account the already proved lower bound for $n=N^2$. 

The case $0<p \leq q <1$ follows by factorization,
\begin{align*}
e_{2n}\big(\Sc_p^N\hr \Sc_1^N\big) & \leq e_{n}\big(\Sc_p^N\hr \Sc_q^N\big)\cdot e_{n}\big(\Sc_q^N\hr \Sc_1^N\big).
\end{align*}
Using the bounds we have already obtained (upper and lower), we derive the lower bound for $e_{n}\big(\Sc_p^N\hr \Sc_q^N\big)$.

\section{Construction of Nets}
\label{s:4}
Let $K,N\in\N$ with $K\le N$.
We denote by $V^N_K$ the Stiefel manifold of $N\times K$ real matrices with orthonormal columns, i.e., if $\id_{\R^K}$ is the identity matrix in $\R^{K\times K}$, then 
\begin{equation}
V^N_K=\big\{U\in\R^{N\times K}\,:\,U^TU=\id_{\R^K}\big\}.
\end{equation}
We will denote the dimension of the Stiefel manifold $V_K^N$ by $d_{N,K}=K(N-(K+1)/2)$. Furthermore, if $M\subseteq X$ is a subset
of a quasi-Banach space $(X,\|\cdot\|_X)$ and $\varepsilon>0$ is a real number, we denote by $N(M,X,\varepsilon)$ (or $N(M,X,\varepsilon)$ if we want to specify the norm)
the covering number of $M$ in $X$, i.e., the minimal number of (closed) $\varepsilon$-balls in $X$ covering $M$.
Moreover, $\|U\|_{op}$ stands for the operator norm of a rectangular matrix $U\in\R^{N\times K}$.
If $K=N$, then $\|U\|_{op}=\|U\|_{\Sc_\infty^N}$. The following lemma provides an upper bound on the covering number of the Stiefel manifold in $\R^{N\times K}$ with respect to the operator norm.

\begin{lemma}\label{lem:nets:1} Let $0<\varepsilon<1$ and let $K,N\in\N$ with $K\le N$. Then
$$
N\big(V^N_K,\|\cdot\|_{op},\varepsilon\big)\le \Big(\frac{c}{\varepsilon}\Big)^{d_{N,K}},
$$
where $c\in(0,\infty)$ is a universal constant.
\end{lemma}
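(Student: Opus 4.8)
The plan is to bound the covering number of $V_K^N$ by a standard volumetric argument, after realizing $V_K^N$ as a subset of a low-dimensional Euclidean ball. First I would fix a maximal $\varepsilon$-separated set $\{U_1,\dots,U_M\}\subseteq V_K^N$ in the operator norm, so that the balls $U_j+\tfrac{\varepsilon}{2}B_{op}$ are pairwise disjoint (here $B_{op}$ is the operator-norm unit ball in $\R^{N\times K}$) and their number $M$ is at least $N(V_K^N,\|\cdot\|_{op},\varepsilon)$ by maximality. Since every $U\in V_K^N$ has $\|U\|_{op}=1$ and the operator norm of an $N\times K$ matrix is dominated by its Hilbert--Schmidt norm, the union $\bigcup_j (U_j+\tfrac{\varepsilon}{2}B_{op})$ sits inside a Hilbert--Schmidt ball of radius $1+\tfrac{\varepsilon}{2}\le \tfrac32$ around the origin in $\R^{N\times K}$.

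The issue is that $\R^{N\times K}$ has dimension $NK$, which is larger than $d_{N,K}=K(N-(K+1)/2)$, so a naive volume comparison in $\R^{N\times K}$ produces the wrong exponent. To fix this I would not work in the ambient space but rather exploit that $V_K^N$ is a $d_{N,K}$-dimensional submanifold: cover it by charts and pass to local coordinates. Concretely, near a point $U_0\in V_K^N$ one can parametrize a neighborhood by the exponential map of the Stiefel manifold (or by writing $U=U_0 Q_1 + U_0^{\perp} A$ after a suitable orthogonal change of basis, with $Q_1\in O(K)$ and $A$ a small matrix constrained to keep orthonormality, or simply by the graph of a smooth map from a $d_{N,K}$-dimensional parameter ball), and this parametrization is bi-Lipschitz with universal constants on a ball of universal radius. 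Thus an $\varepsilon$-net of $V_K^N$ in the operator norm corresponds, chart by chart, to an $(\varepsilon/c')$-net of a $d_{N,K}$-dimensional Euclidean ball of universal radius, which has cardinality at most $(c/\varepsilon)^{d_{N,K}}$ by the classical volumetric bound; a universally bounded number of charts suffices to cover the compact manifold $V_K^N$, and these extra factors are absorbed into the constant $c$.

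An alternative, and in fact the route I would actually try first because it avoids fiddling with charts, is to reduce to the Grassmannian plus a fiber. The map $V_K^N\to G_{N,K}$, $U\mapsto \mathrm{range}(U)$, is a fiber bundle with fiber $O(K)$, and one checks that $\|U-U'\|_{op}$ is controlled above and below (up to universal constants, on the relevant scale) by $\mathrm{d}(\mathrm{range}(U),\mathrm{range}(U'))+\|Q-Q'\|_{op}$ for the $O(K)$-components. Then $N(V_K^N,\|\cdot\|_{op},\varepsilon)\lesssim N(G_{N,K},\varepsilon/c')\cdot N(O(K),\|\cdot\|_{op},\varepsilon/c')$; Szarek's estimate (the source of Lemma \ref{lem:volume metric balls grassmann}) gives the first factor as $(c/\varepsilon)^{K(N-K)}$, and $O(K)\subseteq\R^{K\times K}$ is a $\binom{K}{2}$-dimensional compact manifold, so a volumetric/chart argument gives the second factor as $(c/\varepsilon)^{K(K-1)/2}$. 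Since $K(N-K)+K(K-1)/2 = K(N-(K+1)/2)=d_{N,K}$, the exponents add up exactly, which is reassuring and confirms the dimension count. The main obstacle in either approach is the same: establishing the two-sided bi-Lipschitz comparison between the operator-norm metric on $V_K^N$ and the product of the Grassmannian metric with the orthogonal-group metric (or, equivalently, the bi-Lipschitz property of the local parametrization) with constants that do not depend on $N$ and $K$; once that comparison is in hand the counting is routine.
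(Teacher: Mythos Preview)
Your second approach---reducing to the fiber bundle $V_K^N\to G_{N,K}$ with fiber $O(K)$ and multiplying the two covering bounds---is exactly the paper's route. The paper makes it concrete by fixing, for each $E$ in a Grassmannian net, a frame $U_E\in V_K^N$, setting $\mathcal N=\{U_EZ:E\in\mathcal N_G,\ Z\in\mathcal N_{\mathcal O}\}$, and then verifying directly that $\mathcal N$ is a $3\varepsilon$-net of $V_K^N$.

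Two small clarifications. First, you do not need a two-sided bi-Lipschitz comparison; for an \emph{upper} bound on the covering number only the inequality $\|U-U_EZ\|_{op}\lesssim \|P_E-P_F\|_{op}+\|U_E^TU-Z\|_{op}$ is required, and this is a one-line triangle-inequality estimate once you write $U=P_FU=(P_F-P_E)U+U_E(U_E^TU)$. The only genuine work is that the natural ``fiber coordinate'' $U_E^TU$ of $U$ relative to the chosen frame is not literally in $O(K)$; the paper checks via a singular-value argument that its singular values lie in $(1-\varepsilon,1]$, so it is within $\varepsilon$ of $O(K)$ and hence within $2\varepsilon$ of some $Z\in\mathcal N_{\mathcal O}$. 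Second, you do not need a separate volumetric/chart argument for $O(K)$: Szarek's net estimates (cited in the paper as \cite{Sz2}) already cover both $G_{N,K}$ and $O(K)$ with universal constants, so both factors come from the same source and the dimensions add up to $d_{N,K}$ as you observed.
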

\begin{proof}
Let $\mathcal N_G\subseteq G_{N,K}$ be an $\varepsilon>0$ net of $G_{N,K}$ in the operator norm.
Then, to every $F\in G_{N,K}$ there is $E\in \mathcal N_G$ with $\|P_E-P_F\|_{op}<\varepsilon$.
For every $E\in \mathcal N_G$ let us fix an orthonormal basis $(u^E_1,\dots,u^E_K)$ of $E$
and denote by $U_E$ the $N\times K$ matrix with columns $u^E_1,\dots,u^E_K$. In that notation, we can write $P_E=U_E  U_E^T.$

We denote by ${\mathcal O}(K)$ the group of $K\times K$ orthogonal matrices and 
let $\mathcal N_{\mathcal O}\subseteq {\mathcal O}(K)$ be an $\varepsilon>0$ net
of ${\mathcal O}(K)$ in the operator norm. This means that to every $W\in{\mathcal O}(K)$ there is $Z\in\mathcal N_{\mathcal O}$
with $\|W-Z\|_{op}<\varepsilon.$ We put
\begin{equation}
{\mathcal N}=\{U_EZ\,:\,E\in\mathcal N_G,Z\in\mathcal N_{\mathcal O}\}\subseteq \R^{N\times K}.
\end{equation}
First, let us observe that for any $U_EZ\in \mathcal N$, 
\[
(U_EZ)^T(U_EZ)=Z^TU_E^TU_EZ=Z^T\id_{\R^K}Z=Z^TZ=\id_{\R^K},
\]
which implies that ${\mathcal N}\subseteq V_K^N$.
We will now show that ${\mathcal N}$ is a $3\varepsilon$-net of $V^N_K$ in the operator norm.

Indeed, let $U\in V_K^N$ be arbitrary and let us denote by $F$ its column space. Then there is an $E\in\mathcal N_G$
with $\|P_E-P_F\|_{op}<\varepsilon$. Also, we can write
\begin{align*}
U=P_FU=(P_F-P_E)U+P_EU=(P_F-P_E)U+U_EU_E^TU.
\end{align*}
It is now left to show that there is a $Z\in\mathcal N_{\mathcal O}$ with $\|U_E^TU-Z\|_{op}<2\varepsilon$, because then we obtain
\begin{align*}
\|U-U_EZ\|_{op}&=\|(P_F-P_E)U+U_EU_E^TU-U_EZ\|_{op}\\
&\le \|(P_F-P_E)U\|_{op}+\|U_E(U_E^TU-Z)\|_{op}\\
&\le \|P_F-P_E\|_{op}\cdot\|U\|_{op}+\|U_E\|_{op}\cdot\|U_E^TU-Z\|_{op}<3\varepsilon.
\end{align*}
We show the existence of $Z\in \mathcal N_{\mathcal O}$ with $\|U_E^TU-Z\|_{op}<2\varepsilon$.
For that sake, it is sufficient to prove that ${\rm d}(U_E^TU,{\mathcal O}(K),\|\cdot\|_{op})<\varepsilon$.
By considering the singular value decomposition of $U_E^TU$, it is even enough to show that the singular values of $U_E^TU$
lie in the interval $(1-\varepsilon,1+\varepsilon)$. Since we trivially have $\|U_E^TU\|_{op}\le \|U_E^T\|_{op}\cdot\|U\|_{op}\le 1$, we only need to
show that the singular values are larger than $1-\varepsilon$. Observe that
\begin{align*}
\big\{\|U_E^TUx\|_2\,:\,x\in\R^K,\,\|x\|_2=1\big\}&=\big\{\|U_E^Ty\|_2\,:\,y\in F,\,\|y\|_2=1\big\}\\
&=\big\{\|P_Ey\|_2\,:\,y\in F,\,\|y\|_2=1\big\},
\end{align*}
and that for $y\in F$ with $\|y\|_2=1$, we have
\begin{align*}
\|P_Ey\|_2&=\|P_Fy+(P_E-P_F)y\|_2\ge \|P_Fy\|_2-\|(P_E-P_F)y\|_2\\
&=\|y\|_2-\|(P_E-P_F)y\|_2 > 1-\varepsilon.
\end{align*}
This shows that ${\rm d}\big(U_E^TU,{\mathcal O}(K),\|\cdot\|_{op}\big)<\varepsilon$. 

We now use the estimates of Szarek (see \cite[Prop. 6 and Prop. 8]{Sz2}) and the identity
\begin{align*}
{\rm dim}\,G_{N,K}+{\rm dim}\,{\mathcal O}(K)& =K(N-K)+\frac{K(K-1)}{2} \cr
& = KN-\frac{K(K+1)}{2} 
 ={\rm dim}V_K^N=d_{N,K}
\end{align*}
to show that 
$$
|{\mathcal N}|\le |\mathcal N_G|\cdot|\mathcal N_{\mathcal O}|\le
\Bigl(\frac{c}{\varepsilon}\Bigr)^{{\rm dim}\,G_{N,K}}\cdot \Bigl(\frac{c}{\varepsilon}\Bigr)^{{\rm dim}\,{\mathcal O}(K)}
=\Bigl(\frac{c}{\varepsilon}\Bigr)^{d_{N,K}}.
$$
\end{proof}
The construction that we will present in the proof of Theorem \ref{thm:construction} is crucially based on a decomposition of a matrix into a sum of low-rank matrices. Therefore, let us define
\begin{equation}\label{eq:nets:1}
R^N_{K,q}=\big\{X\in\R^{N\times N}\,:\,\rank(X)\le K,\,\|X\|_{\Sc_q^N}\le 1\big\}.
\end{equation}
The next lemma estimates the covering numbers of $R^N_{K,q}$ in the Schatten $q$-classes $\Sc_q^N$.
\begin{lemma}\label{lem:nets:2}Let $0< q\le \infty$. Then there exists $c_q\in[1,\infty)$ such that for every $0<\varepsilon\le c_q$ and all $K, N\in\N$ with $K\le N$,
it holds
\begin{equation}
N\big(R^N_{K,q},\Sc^N_q,\varepsilon\big)\le \Big(\frac{c_q}{\varepsilon}\Big)^{2d_{N,K}+K}=\Big(\frac{c_q}{\varepsilon}\Big)^{K(2N-K)}.
\end{equation}
\end{lemma}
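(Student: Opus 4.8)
The plan is to parametrize a matrix $X \in R^N_{K,q}$ via its singular value decomposition $X = U \Sigma V^T$ where $U \in V^N_K$ is the $N \times K$ matrix of left singular vectors, $V \in V^N_K$ the $N \times K$ matrix of right singular vectors, and $\Sigma = \diag(\sigma_1,\dots,\sigma_K)$ the (truncated) diagonal matrix of singular values with $\|(\sigma_1,\dots,\sigma_K)\|_q \le 1$. The three ``coordinates'' $U$, $V$, $\Sigma$ live respectively in $V^N_K$ (covered in operator norm by Lemma \ref{lem:nets:1}, giving $(c/\varepsilon)^{d_{N,K}}$ points), in $V^N_K$ again (another $(c/\varepsilon)^{d_{N,K}}$ points), and in the unit ball of $\ell_q^K$ (covered in the $\ell_q$-quasi-norm by a standard volumetric argument, giving $(c_q/\varepsilon)^K$ points). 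Multiplying the three covering numbers gives the claimed bound $(c_q/\varepsilon)^{2d_{N,K}+K}$, and the identity $2d_{N,K} + K = 2K(N-(K+1)/2) + K = K(2N-K)$ is immediate.

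First I would fix $\varepsilon > 0$ and let $\mathcal{N}_U, \mathcal{N}_V \subseteq V^N_K$ be $\eta$-nets in the operator norm (for an appropriate $\eta$ comparable to $\varepsilon$) and $\mathcal{N}_\Sigma$ an $\eta$-net of $B_{\ell_q^K}$ in the $\ell_q$-quasi-norm; the latter has cardinality at most $(c_q/\eta)^K$ by comparing volumes of $\eta$-balls inside $(1+\eta)B_{\ell_q^K}$ (handling $q < 1$ via the $q$-triangle inequality, which only changes constants). Then I would define the net $\mathcal{N} = \{ \tilde U \tilde\Sigma \tilde V^T : \tilde U \in \mathcal{N}_U, \tilde\Sigma \in \diag(\mathcal{N}_\Sigma), \tilde V \in \mathcal{N}_V \}$, so that $|\mathcal{N}| \le (c/\eta)^{d_{N,K}} \cdot (c/\eta)^{d_{N,K}} \cdot (c_q/\eta)^K \le (c_q'/\varepsilon)^{2d_{N,K}+K}$ after absorbing constants and rescaling $\eta$.

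The core estimate is the approximation step: given $X = U\Sigma V^T \in R^N_{K,q}$, pick $\tilde U, \tilde\Sigma, \tilde V$ from the nets within $\eta$ of $U, \Sigma, V$ respectively, and bound
\[
\|X - \tilde U \tilde\Sigma \tilde V^T\|_{\Sc_q^N} \le \|(U-\tilde U)\Sigma V^T\|_{\Sc_q^N} + \|\tilde U(\Sigma - \tilde\Sigma)V^T\|_{\Sc_q^N} + \|\tilde U\tilde\Sigma(V-\tilde V)^T\|_{\Sc_q^N}
\]
(using the $\bar q$-triangle inequality for $q<1$, at the cost of a constant). Here I would use that the Schatten $q$-norm is submultiplicative against the operator norm on the left and right — $\|AB C\|_{\Sc_q^N} \le \|A\|_{op}\|B\|_{\Sc_q^N}\|C\|_{op}$ — together with $\|\Sigma\|_{\Sc_q^N} = \|(\sigma_j)\|_q \le 1$, $\|U-\tilde U\|_{op} < \eta$, $\|V - \tilde V\|_{op} < \eta$, $\|\tilde U\|_{op} = \|\tilde V\|_{op} = 1$, and for the middle term $\|\Sigma - \tilde\Sigma\|_{\Sc_q^N} = \|(\sigma_j) - (\tilde\sigma_j)\|_q < \eta$. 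Summing, the total error is at most a constant multiple of $\eta$; choosing $\eta$ a small constant times $\varepsilon$ yields an $\varepsilon$-net.

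The main obstacle is a subtle one: the truncated SVD of $X$ uses $N \times K$ matrices $U, V$ with orthonormal columns, but multiplying a column-net element $\tilde U$ by $\tilde\Sigma$ and $\tilde V^T$ need not preserve rank exactly or keep the object inside any clean set — this is harmless since we only need $\mathcal{N}$ to be a set of approximants in $\Sc_q^N$, not a subset of $R^N_{K,q}$, but one must be careful that the factorization $X = U\Sigma V^T$ with $U,V \in V^N_K$ genuinely exists for every $X$ of rank $\le K$ (pad with arbitrary orthonormal columns if $\rank(X) < K$, setting the extra singular values to zero). A second point requiring care is the case $q = \infty$ and the restriction $\varepsilon \le c_q$: for $\varepsilon$ close to $1$ the estimate is trivial since $R^N_{K,q} \subseteq B_{\Sc_q^N}$ has diameter $\le 2$, so one only needs the bound for small $\varepsilon$, which is exactly the regime where the $\eta$-net cardinalities $(c/\eta)^{\dim}$ are the right order. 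Everything else is routine bookkeeping of constants depending only on $q$.
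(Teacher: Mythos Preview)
Your proposal is correct and follows essentially the same argument as the paper: parametrize $X\in R^N_{K,q}$ by its truncated SVD $X=U\Sigma V^T$ with $U,V\in V_K^N$ and $\sigma\in B_{\ell_q^K}$, take product nets (two copies of the Stiefel net from Lemma~\ref{lem:nets:1} and one $\ell_q^K$-ball net), and bound the approximation error via a three-term telescoping and the inequality $\|AB\|_{\Sc_q^N}\le\|A\|_{op}\|B\|_{\Sc_q^N}$ together with the $\bar q$-triangle inequality. The only cosmetic differences are the order of the telescoping and that the paper invokes Lemma~\ref{lem:entropy_finite} for the $\ell_q^K$-net where you use the equivalent volumetric count.
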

\begin{proof}
We write again $\bar q=\min(1,q)$. Moreover, let us assume that
\[
{\mathcal N}_{N,K}\subseteq V_K^N
\]
is an $\varepsilon/3^{1/\bar q}$-net of $V_K^N$ in the operator norm and that
\[
{\mathcal N}_{q}\subseteq B_{\ell_q^K}
\]
be an $\varepsilon/3^{1/\bar q}$-net of the unit ball $B_{\ell_q^K}$ in the quasi-Banach space $\ell_q^K$.
We now define
\[
{\mathcal N}_{K,q}^N=\Big\{\widetilde U\widetilde \Sigma {\widetilde V}^T\,:\,\widetilde U,\widetilde V\in {\mathcal N}_{N,K},\, \widetilde \Sigma=\diag(\widetilde \sigma),\, \widetilde \sigma\in {\mathcal N}_{q}\Big\}.
\]
Let us consider $X\in R_{K,q}^N$ with $X=U\Sigma V^T$, where $U,V\in V_K^N$ and $\Sigma=\diag(\sigma)$ with $\|\sigma\|_{\ell_q^K}\le 1$. Moreover, consider $Y=\widetilde U\widetilde \Sigma\widetilde V^T$,
where $\widetilde U,\widetilde V\in {\mathcal N}_{N,K}$ such that $\|U-\widetilde U\|_{op} <\varepsilon/3^{1/\bar q}$, $\|V-\widetilde V\|_{op}<\varepsilon/3^{1/\bar q}$
and where $\widetilde\sigma\in{\mathcal N}_{q}$
with $\|\sigma-\widetilde\sigma\|_{\ell^K_q}<\varepsilon/3^{1/\bar q}$. Then, we have $Y\in{\mathcal N}_{K,q}^N$ and obtain the estimate
\begin{align*}
\|X-Y\|^{\bar q}_{\Sc_q^N}&=\|U\Sigma V^T-\widetilde U\widetilde \Sigma \widetilde V^T\|^{\bar q}_{\Sc_q^N}\\
&\le \|U\Sigma (V^T-\widetilde V^T)\|^{\bar q}_{\Sc_q^N} + \|U(\Sigma-\widetilde\Sigma)\widetilde V^T\|^{\bar q}_{\Sc_q^N}+\|(U-\widetilde U)\widetilde \Sigma\widetilde V^T\|^{\bar q}_{\Sc_q^N}\\
&= \|\Sigma (V^T-\widetilde V^T)\|^{\bar q}_{\Sc_q^N} + \|\Sigma-\widetilde\Sigma\|^{\bar q}_{\Sc_q^N}+\|(U-\widetilde U)\widetilde \Sigma\|^{\bar q}_{\Sc_q^N}\\
&\le \|\Sigma\|^{\bar q}_{\Sc_q^N}\cdot \|V^T-\widetilde V^T\|_{op}^{\bar q} + \|\sigma-\widetilde\sigma\|^{\bar q}_{\ell_q^N}+\|U-\widetilde U\|_{op}^{\bar q}\cdot \|\widetilde \Sigma\|^{\bar q}_{\Sc_q^N}\\
&<\varepsilon^{\bar q}/3+\varepsilon^{\bar q}/3+\varepsilon^{\bar q}/3=\varepsilon^{\bar q}.
\end{align*}
Here, we have used the inequality $\|AB\|_{\Sc_q^N}\le \|A\|_{op}\cdot \|B\|_{\Sc_q^N}$ (see, for instance, \cite[Proposition IV.2.4]{Bhatia}).
Finally, we use Lemmas \ref{lem:nets:2} and \ref{lem:entropy_finite} to estimate the number of elements in ${\mathcal N}_{K,q}^N$. We obtain
\begin{align*}
|{\mathcal N}_{K,q}^N|\le |{\mathcal N}_{N,K}|\cdot |{\mathcal N}_{q}|\cdot|{\mathcal N}_{N,K}|\le \Bigl(\frac{c_q}{\varepsilon}\Bigr)^{2d_{N,K}+K}
=\Bigl(\frac{c_q}{\varepsilon}\Bigr)^{K(2N-K)}.
\end{align*}
\end{proof}

In the next theorem, we present a constructive proof for the upper bound of the entropy numbers of natural identities of Schatten classes in the range $N \leq n \leq N^2$. Our construction is crucially based on a low-rank matrix decomposition.

\begin{theo}\label{thm:construction}Let $0<p\le q\le\infty$ and $N\le n\le N^2$. Then
\begin{equation}\label{eq:est_up1}
e_n\big(\Sc_p^N\hookrightarrow \Sc_q^N\big)\le C_{p,q} \Bigl(\frac{N}{n}\Bigr)^{1/p-1/q},
\end{equation}
where $C_{p,q}\in(0,\infty)$ depends only on $p$ and $q$.
\end{theo}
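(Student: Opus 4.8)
\textbf{Plan for the constructive proof of Theorem \ref{thm:construction}.}
The idea is to build an explicit $\varepsilon$-net of $B_p^N$ in $\Sc_q^N$ by decomposing each matrix $X\in B_p^N$ into a sum of blocks of geometrically increasing rank but geometrically decreasing Schatten norm, covering each block separately by the nets from Lemma \ref{lem:nets:2}, and then bounding the total cardinality. Concretely, fix the integer $k$ with $\big(\frac{N}{n}\big)^{1/p-1/q}$ roughly of the target size, i.e. choose $k\asymp n/N$ so that $2N\le n$ forces $1\le k\le N$. Given $X\in B_p^N$ with singular value decomposition $X=U\Sigma V^T$ and singular values $\sigma_1\ge\sigma_2\ge\cdots$, I split the index set $\{1,\dots,N\}$ dyadically: the $j$-th block $I_j$ consists of the indices $i$ with $2^{j-1}k < i \le 2^j k$ (and a zeroth block $I_0=\{1,\dots,k\}$), so there are at most $\log_2(N/k)+1$ blocks. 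Write $X=\sum_{j\ge 0} X_j$ where $X_j$ is the restriction of $X$ to the singular values in $I_j$; then $\rank(X_j)\le 2^j k$, and since $\sigma_i \le (i)^{-1/p}\|X\|_{\Sc_p^N}$ for $X\in B_p^N$ (a consequence of monotonicity: $i\,\sigma_i^p \le \sum_{\ell\le i}\sigma_\ell^p\le 1$), the $\Sc_q^N$-norm of $X_j$ is controlled by $\|X_j\|_{\Sc_q^N}\le (2^j k)^{1/q}(2^{j-1}k)^{-1/p}\lesssim (2^j k)^{1/q-1/p}$.

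The next step is to cover each rescaled block $X_j/\|X_j\|_{\Sc_q^N}$, which lies (up to the constant just computed) in $R^N_{2^jk,q}$, by Lemma \ref{lem:nets:2}, using precision $\varepsilon_j$ chosen so that after rescaling by $\|X_j\|_{\Sc_q^N}$ the error from block $j$ is at most $\delta\cdot 2^{-j/\bar q}$ for a target accuracy $\delta \asymp (N/n)^{1/p-1/q}\asymp (k/N)^{1/p-1/q}\cdot$(something); summing the $\bar q$-th powers of these errors over $j$ gives total error $\lesssim\delta$. Lemma \ref{lem:nets:2} then says block $j$ costs at most $(c_q/\varepsilon_j)^{2^jk(2N-2^jk)}\le (c_q/\varepsilon_j)^{2^{j+1}kN}$ net points. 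Taking the product over all $\log_2(N/k)+1$ blocks, the total log-cardinality is $\sum_j 2^{j+1}kN\log(c_q/\varepsilon_j)$; since $\varepsilon_j$ is (at worst) polynomially small in $2^j$, $k$, $N$ and $\delta$, each $\log(c_q/\varepsilon_j)$ is $O(\log(N) + j)$, and the geometric factor $2^{j+1}$ makes the sum dominated by its largest term, giving total log-cardinality $\lesssim kN\log N$. This is $\lesssim n-1$ provided $k\asymp n/(N\log N)$ — which forces a spurious logarithm. To remove it one argues more carefully: the blocks with $2^jk$ close to $N$ contribute $\asymp N^2\log(\varepsilon_j^{-1})$ and one needs $\varepsilon_j$ there to be only a constant, not polynomially small, which is arranged because $\|X_j\|_{\Sc_q^N}$ is already exponentially small in $j$ for those blocks when $p<q$; a slightly different (non-uniform) choice of $\varepsilon_j$, larger for large $j$, keeps the error controlled while making the cardinality $\lesssim kN$, hence $k\asymp n/N$ works and yields exactly $e_n\lesssim_{p,q}(N/n)^{1/p-1/q}$.

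Finally I assemble: the net for $B_p^N$ is the $\bar q$-sumset $\{\sum_j Y_j : Y_j \text{ in the net for block }j\}$; by the $\bar q$-triangle inequality its mesh in $\Sc_q^N$ is $\lesssim\delta$, and after absorbing constants this proves \eqref{eq:est_up1} for $2N\le n\le N^2$; the range $N\le n\le 2N$ is covered trivially by $e_n\le\|\Sc_p^N\hookrightarrow\Sc_q^N\|=1\asymp (N/n)^{1/p-1/q}$, and odd/boundary values of $n$ are handled by monotonicity and adjusting $C_{p,q}$. The main obstacle I anticipate is exactly the bookkeeping in the cardinality bound — choosing the per-block accuracies $\varepsilon_j$ so that the error telescopes to $\delta$ while the product of covering numbers stays below $2^{n-1}$ \emph{without} losing a logarithmic factor; this is where the inequality $p<q$ (equivalently $1/q-1/p<0$, forcing $\|X_j\|_{\Sc_q^N}$ to decay geometrically in $j$) is essential, and getting the dependence on $p,q$ clean requires care with the $\bar q$-triangle inequality and with the constant $c_q$ from Lemma \ref{lem:nets:2}.
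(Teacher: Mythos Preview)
Your overall architecture (dyadic rank decomposition plus Lemma \ref{lem:nets:2} plus the $\bar q$-triangle inequality) is the right one, but your decomposition runs in the wrong direction and, as a consequence, the logarithm you flag cannot be removed by the fix you suggest.

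Concretely: with $k\asymp n/N$ your blocks $I_0,I_1,\dots$ have ranks $k,\,k,\,2k,\dots$ climbing up to $N$. Your proposed repair---take $\varepsilon_j$ of constant size for the large-$j$ blocks---indeed disposes of all $j\ge 1$: since $\|X_j\|_{\Sc_q^N}\lesssim (2^{j-1}k)^{1/q-1/p}$ and $1/q-1/p<0$, those contributions already sum (in $\bar q$-th powers) to $\lesssim k^{\bar q(1/q-1/p)}$ without any covering. But this leaves the entire burden on $I_0$: a block of rank $k$ with $\|X_0\|_{\Sc_q^N}\le 1$ that must be covered to precision $\varepsilon_0\asymp k^{1/q-1/p}$. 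One application of Lemma \ref{lem:nets:2} then forces
\[
\log_2|\mathcal N_0|\;\lesssim\; k(2N-k)\,\log_2\!\big(c_q\,k^{1/p-1/q}\big)\;\asymp\; kN\,(1/p-1/q)\,\log_2 k,
\]
which is $\asymp n\log(n/N)$, not $\asymp n$. No admissible choice of $\varepsilon_0$ helps: making $\varepsilon_0$ larger kills the precision, making it smaller only worsens the cardinality. So the spurious logarithm is genuinely stuck in your scheme.

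The paper's construction avoids this by decomposing in the opposite direction. One discards the tail (indices $>2^\ell\approx n/N$) outright---this is your ``large $j$'' part and costs nothing---and then splits the head $\{1,\dots,2^\ell\}$ dyadically into blocks of ranks $1,2,4,\dots,2^{\ell-1}$. The $j$-th such block has rank $2^{j-1}$ and $\Sc_q$-norm at most $2^{(j-1)(1/q-1/p)}$, so it lives in $2^{(j-1)(1/q-1/p)}R^N_{2^{j-1},q}$ and only needs precision $\varepsilon_j\asymp 2^{(j-\ell)(1/p-1/q+\alpha)}$ for some fixed $\alpha>0$. The point is that the covering exponent $2^{j-1}(2N-2^{j-1})\le 2^jN$ is small exactly where $\log(c_q/\varepsilon_j)\asymp(\ell-j)$ is large, and
\[
\sum_{j=1}^{\ell}2^{j}N\,(\ell-j)\;=\;2^{\ell}N\sum_{j=1}^{\ell}2^{j-\ell}(\ell-j)\;\le\;2\cdot 2^{\ell}N\;=\;2n,
\]
with no logarithm. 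In short: subdivide the first $k$ singular values, not the last $N-k$; the geometric trade-off between rank and required precision is what kills the $\log k$, and your single rank-$k$ block $I_0$ does not see it.
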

\begin{proof}
We subdivide the proof into three steps.

\noindent\textbf{Step 1:} By the monotonicity of entropy numbers, it is enough to consider $N=2^\nu$ and $n=2^{\ell}\cdot 2^{\nu}$, where $1\le \ell\le \nu$ are natural numbers.
In that case, $N/n=2^{-\ell}$.

Let now $A\in \Sc_p^N$ be an arbitrary matrix with $\|A\|_{\Sc_p^N}\le 1$. We use the singular value decomposition of $A$ in the form $A=U\Sigma V^T$
to decompose it into a sum of low-rank matrices. If $\Sigma=\diag(\sigma_1,\dots,\sigma_{N})$ with $\sigma_1\ge\sigma_2\ge\dots\ge\sigma_N\ge 0$ is the diagonal matrix collecting the singular values of $A$ on its diagonal, we define
\[
A_1:=U\Sigma_1V^T,\qquad \Sigma_1:=\diag(\sigma_1,0,\dots,0)
\]
and, for $j=2,\dots,\ell$,
\[
A_j:=U\Sigma_jV^T,\qquad \Sigma_j:=\diag(0,\dots,0,\sigma_{2^{j-1}},\dots,\sigma_{2^j-1},0,\dots,0).
\]
That way, we obtain $\rank(A_j)\le 2^{j-1}$ for $j=1,\dots,\ell$.
Furthermore, $\|A_1\|_{\Sc^N_q}\le 1$ and, for all $j=2,\dots,\ell$, we have
\begin{align*}
\|A_j\|_{\Sc_q^N}&=\bigg(\sum_{u=2^{j-1}}^{2^j-1}\sigma_u^q\bigg)^{1/q}\le \Bigl(2^{j-1}\sigma_{2^{j-1}}^q\Bigr)^{1/q}=2^{(j-1)/q}\bigl(\sigma_{2^{j-1}}^p\bigr)^{1/p}\\
&\le2^{(j-1)/q}\bigg(\frac{1}{2^{j-1}}\sum_{u=1}^{2^{j-1}}\sigma_u^p\bigg)^{1/p}\le 2^{(j-1)(1/q-1/p)}.
\end{align*}
We can therefore decompose as follows,
\[
A=A_1+A_2+\dots+A_\ell+A^c,
\]
where $\rank(A_j)\le 2^{j-1}$ and $\|A_j\|_{\Sc_q^N}\le 2^{(j-1)(1/q-1/p)}$
for all $j=1,\dots,\ell$. Finally, we obtain 
\begin{align*}
\|A^c\|^q_{\Sc^N_q}&=\sum_{u=2^\ell}^{N}\sigma_{u}^q=\sum_{u=2^\ell}^{N}\sigma_{u}^p\sigma_u^{q-p}\le \sigma_{2^{\ell}}^{q-p}\cdot\sum_{u=2^\ell}^N\sigma_u^p\\
&\le \bigg(\frac{1}{2^\ell}\sum_{u=1}^{2^\ell}\sigma_u^p\bigg)^{\frac{q-p}{p}}\cdot\sum_{u=2^\ell}^N\sigma_u^p\\
&\le 2^{-\ell\cdot\frac{q-p}{p}}\|A\|_{\Sc_p^n}^{q-p}\cdot\|A\|_{\Sc_p^N}^{p}
\end{align*}
and $\|A^c\|_{\Sc_q^N}\le 2^{\ell(1/q-1/p)}.$
\vskip 1mm
\noindent\textbf{Step 2:} We first observe that, for $j=1,\dots,\ell$, the low-rank matrix $2^{(j-1)(1/p-1/q)}A_j$ belongs to $R^{N}_{2^{j-1},q}$, where $R^{N}_{2^{j-1},q}$ was defined in \eqref{eq:nets:1}.
Now let ${\mathcal N}_j\subseteq R^{N}_{2^{j-1},q}$, $j=1,\dots,\ell$
be an $\varepsilon_j$-net. Then,
$$
{\mathcal N}:=\left\{\sum_{j=1}^\ell 2^{-(j-1)(1/p-1/q)}Z_j\,:\, Z_j\in {\mathcal N}_j\ \text{for all}\ j=1,\dots,\ell\right\}
$$
is an $\varepsilon$-net of the unit ball of $\Sc_p^N$ in $\Sc_q^N$, where
\begin{equation}
\varepsilon^{\bar q}=\sum_{j=1}^\ell \big[2^{-(j-1)(1/p-1/q)}\varepsilon_j\big]^{\bar q}+2^{\ell{\bar q}(1/q-1/p)}.
\end{equation}
Furthermore, ${\mathcal N}$ has $\displaystyle |{\mathcal N}|=\prod_{j=1}^\ell|{\mathcal N}_j|$ elements.
\vskip 1mm
\noindent\textbf{Step 3:} For $j=1,\dots,\ell$, we choose $\varepsilon_j=c_{q}\cdot 2^{(j-\ell)(1/p-1/q+\alpha)}$, where $c_q\in[1,\infty)$ is the constant from Lemma \ref{lem:nets:2}
and the value of the parameter $\alpha>0$ will be specified later.
This gives
\begin{align}\label{eq:est_up11}
\notag \varepsilon^{\bar q}&=\sum_{j=1}^\ell \big[2^{-(j-1)(1/p-1/q)}\varepsilon_j\big]^{\bar q}+2^{\ell{\bar q}(1/q-1/p)}\\
\notag&=\sum_{j=1}^\ell \big[c_{q}2^{(1-\ell)(1/p-1/q)}2^{(j-\ell)\alpha}\big]^{\bar q}+2^{\ell{\bar q}(1/q-1/p)}\\
&=c_{q}^{\bar q}2^{(1-\ell){\bar q}(1/p-1/q)}\sum_{j=1}^\ell 2^{(j-l)\alpha{\bar q}}+2^{\ell{\bar q}(1/q-1/p)}\\
\notag &\le c_{\alpha,q}^{\bar q}2^{-\ell{\bar q}(1/p-1/q)}=c_{\alpha,q}^{\bar q}(N/n)^{{\bar q}(1/p-1/q)}.
\end{align}
Furthermore, by Lemma \ref{lem:nets:2},
\begin{align*}
|{\mathcal N}|&=\prod_{j=1}^\ell|{\mathcal N}_j|\le \prod_{j=1}^\ell \Bigl(\frac{c_q}{\varepsilon_j}\Bigr)^{2^{j-1}(2N-2^{j-1})}
\le \prod_{j=1}^\ell \Bigl(2^{(\ell-j)\cdot (1/p-1/q+\alpha)}\Bigr)^{2^{j}N}
\end{align*}
and
\begin{align}
\notag\log_2(|{\mathcal N}|)&=\sum_{j=1}^\ell\log_2(|{\mathcal N}_j|)\le \sum_{j=1}^\ell 2^{j}N\log_2(2^{(\ell-j)\cdot (1/p-1/q+\alpha)})\\
\label{eq:est_up10}&=(1/p-1/q+\alpha)N\sum_{j=1}^\ell 2^{j}(\ell-j)=(1/p-1/q+\alpha)N2^{\ell}\sum_{j=1}^\ell 2^{j-\ell}(\ell-j)\\
\notag&\le 2(1/p-1/q+\alpha)2^\ell N=2(1/p-1/q+\alpha)n.
\end{align}
We now choose $\alpha=1$ and $\gamma\in\N$ with $\gamma\ge 1+2(1/p-1/q+1)$.
Combining \eqref{eq:est_up11} with \eqref{eq:est_up10}, we obtain the upper bound $e_{\gamma n}\big(\Sc_p^N\hookrightarrow \Sc_q^N\big)\le c(N/n)^{1/p-1/q}$.
\end{proof}


\end{document}